\newtheorem{theorem}{Theorem}
\newtheorem{theoremb}{Theorem}
\newtheorem{theoremd}{Theorem}
\newtheorem{dfn}[theoremb]{Definition}
\newtheorem{rk}[theoremd]{Remark\!\!}
\newtheorem{cor}[theorem]{Corollary}
\newtheorem{prop}[theorem]{Proposition}
\newcommand\bib[1]{\bibitem[#1]{#1}}
\newcommand\1{{\bf 1}}
\renewcommand\a{\alpha}
\newcommand\C{{\mathbb C}}
\newcommand\D{{\mathcal D}}
\newcommand\E{\mathcal{E}}
\renewcommand\l{\lambda}
\newcommand\La{\Lambda}
\newcommand\oo{\omega}
\newcommand\op[1]{\mathop{\rm #1}\nolimits}
\newcommand\ot{\otimes}
\newcommand\p{\partial}
\newcommand\R{{\mathbb R}}
\newcommand\ti{\tilde}
\newcommand\vp{\varphi}
\newcommand\z{\sigma}
\newcommand{\comm}[1]{}
\begin{document}

 \title[Laplace transformation of Lie class $\oo=1$ systems]{Laplace transformation of \\
 Lie class $\oo=1$ overdetermined systems}
 \author{Boris Kruglikov}
 \date{}
 \address{Institute of Mathematics and Statistics, University of Troms\o, Troms\o\ 90-37, Norway. \quad
E-mail: boris.kruglikov@uit.no.}
 \keywords{Lie's class 1, overdetermined systems of PDEs, characteristic, integral, symbols,
compatibility, Spencer cohomology.}

 \vspace{-14.5pt}
 \begin{abstract}
In this paper we investigate overdetermined systems of scalar PDEs on the plane
with one common characteristic, whose general solution
depends on 1 function of 1 variable. We describe linearization of such systems
and their integration via Laplace transformation, relating this to Lie's
integration theorem and formal theory of PDEs.
 \end{abstract}

 \maketitle

\section*{Introduction}

Consider an overdetermined system $\E$ of partial differential equations,
which we assume formally integrable (i.e. all the compatibility conditions fulfil)
and regular (this is a generic condition micro-locally).
We will restrict to systems with two independent and one dependent variables
(but the theory can be extended to other systems).

The characteristic variety $\op{Char}^\C(\E)$ of this system is an effective
divisor on $\C P^1$, i.e. is a collection of points with positive multiplicities.
Let $\oo=\op{deg}(\op{Char}^\C(\E))$ be the total multiplicity.
This number was called {\it class\/} of the system $\E$ by Sophus Lie \cite{L$_1$}.

In terminology of Ellie Cartan $\oo$ is the {\it Cartan integer\/} $s_1$ (provided the
Cartan character is 1: $s_2=0$). Cartan's test \cite{C$_2$} implies that the general solution
$u\in\op{Sol}(\E)$ depends on $\oo$ functions of 1 variable.

Note that $\oo$ can be described in a different way: Since the characteristic variety is discrete,
the symbol $g_k$ of $\E$ stabilizes: $\lim\limits_{k\to\infty}\dim g_k=\oo$
(when the system becomes involutive, see Appendix \ref{SB} for details).

The case $\oo=0$ corresponds to finite type systems and integration of $\E$ can
be reduced to a system of ODEs via the Frobenius theorem.

Another well-known class is $\oo=2$, especially scalar second order PDEs on the plane.
One of the classical approaches to such systems is the Laplace transformation.

In this paper we show that Laplace transformations exist in the case $\oo=1$ as well
(a comment on the case of general $\oo$ will be made at the end of the paper).

\subsection{Main results}\label{S01}

In his paper of 1895 \cite{L$_1$} S. Lie demonstrated that a compatible
(=formally integrable) class $\oo=1$ overdetermined system is integrable by reduction to ODEs.
Modern proof and applications of this result will be discussed in \cite{K$_2$}.

In this paper we demonstrate that in certain cases this reduction is very precise, and
can be decomposed into a sequence of external transformations in jets that are analogs
of the classical Laplace transformation.
These latter are the differential substitutions with differential inverses.
This provides a method for an effective (algorithmic) integration of PDE systems of class $\oo=1$.

The transformations will be fully described in the linear case, and linearizable systems
will be characterized via a simple criterion.
For non-linear systems we discuss some phenomenology and examples.

Assume that the only characteristic for $\oo=1$ linear system is straightened $X=\p_x$
(this involves integration of a 1st order non-autonomous non-linear scalar ODE).
 \begin{theorem}\label{Thm}
A linear system $\E$ of class $\oo=1$ with one characteristic $X$ is integrable in
closed form and quadratures for generic $\E$,
or is reducible to class $\oo=0$ (ODEs) in singular cases.
 \end{theorem}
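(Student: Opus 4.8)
\begin{Proof}{Idea of proof}
The plan is to transpose the classical Laplace cascade of scalar second order ($\oo=2$) hyperbolic PDEs to the one-characteristic setting, where -- unlike in the $\oo=2$ case -- the overdetermination forces the cascade to terminate. Since $X=\p_x$ is already straightened, formal integrability and regularity let us prolong $\E$ to an involutive system, whose stabilized symbol is one-dimensional (Appendix~\ref{SB}); as the sole characteristic sits at $[\p_x]$, this symbol is of pure $\p_x$-type, of some order $\ell=\ell(\E)$. The first step is to bring $\E$ into a canonical \emph{Laplace normal form} adapted to $X$: its principal part is of finite type along $\p_x$, so that the Cauchy data reduce to one function of the transverse variable, and $\E$ is encoded by finitely many functions of $(x,y)$ -- a connection-like datum along $\p_x$ together with coupling coefficients. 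The invariants of this normal form under changes of variables preserving $X$ (point changes in the linear case, contact changes for the nonlinear interpretation) form a \emph{finite} list of Laplace invariants $h_0,\dots,h_N$, written as explicit differential expressions in the coefficients; these are the $\oo=1$ analogues of the two classical Laplace invariants.

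Next I would construct the Laplace transformation proper: a differential substitution $v=\mathcal L(u)=\p_x u+b\,u$ -- more generally a first order operator along $X$ read off from the normal form -- carrying solutions of $\E$ to solutions of a new linear class $\oo=1$ system $\E'=\mathcal L(\E)$ with the same characteristic. The decisive point is that when the leading invariant $h_0$ is nowhere zero, $\mathcal L$ has a purely algebraic differential inverse (one recovers $u$ from $v,\p_x v$), so $\E$ and $\E'$ are externally equivalent in jets -- a B\"acklund-type correspondence by differential substitutions with differential inverses, as announced in the introduction. One then computes the action of $\mathcal L$ on the order and on the invariant list, obtaining $\ell(\E')\le\ell(\E)$ and a one-step shift $h_i\mapsto h_{i+1}$ (up to invertible factors) of the Laplace string.

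This produces the dichotomy. If all $h_i$ are nowhere zero -- the \emph{generic} case -- we iterate $\mathcal L$; since the invariant string has finite length $N$ and shifts by one per step, after at most $N+1$ transformations the cascade reaches a system of minimal order, essentially a first order equation $\p_x w=\beta\,w+\gamma$ along the characteristic, solved by a quadrature. Unwinding the finitely many substitutions -- each inverted algebraically, the intermediate $h_i$ being nonzero -- then expresses the original $u$ in closed form and quadratures through the Cauchy datum. If instead some $h_i$ vanishes identically -- the \emph{singular} case -- that step is no longer invertible; there $\mathcal L$ splits off a finite type (class $\oo=0$) subsystem, and induction on the order reduces $\E$ to a class $\oo=0$ system, which is integrated by the Frobenius theorem. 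In particular this recovers and refines Lie's 1895 reduction of class $\oo=1$ systems to ODEs~\cite{L$_1$} (compare~\cite{K$_2$}).

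The main obstacle is the finiteness of the cascade. Classically ($\oo=2$) the Laplace chain is generically infinite and stops only under an exceptional condition; here one must show the overdetermination halts it after boundedly many steps -- equivalently, that only finitely many Laplace invariants of an $\oo=1$ system are independent. I would establish this by attaching to $\E$ a nonnegative integer -- the order $\ell$, or more robustly a Spencer-cohomological measure of distance from finite type -- that the Laplace step strictly lowers (or certifies to be already minimal), and by identifying its minimal value with the quadrature/ODE bottom. A subsidiary difficulty is to make the normal form and the invariants $h_i$ genuinely canonical, independent of the prolongation order and of the residual freedom fixing $X=\p_x$; this is where the symbol theory of Appendix~\ref{SB} enters, and from the same invariants the linearizability criterion for nonlinear systems will later be read off.
\end{Proof}
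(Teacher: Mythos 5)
Your outline matches the paper's strategy in its broad strokes: a gauge-adapted substitution $v=Xu$, a differential inverse read off from the system when a leading relative invariant is nonzero, iteration, and the generic/singular dichotomy. But the decisive step -- termination of the cascade -- is exactly the point you flag as ``the main obstacle'' and leave open, and your first candidate monovariant does not work. The maximal order $\ell$ is \emph{not} monotone under generalized Laplace transformations: in the paper's case $\Upsilon_{33}^{2{\rm b}}$ (with $k\ne0$) a system of type $2E_3$ is sent to one of type $E_2+E_4$, and the sample routes in the zoo include steps such as $2E_4\to 2E_4+E_5$, so both the top order and the number of equations can grow. The quantity that does the job is the complexity $\varkappa=\sum_i(\dim g_i-1)$ -- precisely your ``Spencer-cohomological measure of distance from finite type'' -- and the paper's Theorem \ref{SHA} proves it strictly decreases by a Cartan--K\"ahler argument: write the general solution as $u=\a_qf^{(q)}+\dots+\a_0f+c_1\phi_1+\dots+c_{\varkappa-q}\phi_{\varkappa-q}$, observe that after the basic gauge the top coefficient $\a_q$ depends only on $y$, so $v=u_x$ lowers the count. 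Without this (or an equivalent) argument the induction has no well-founded measure and the theorem is not established.

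Two further points. First, the ``finite Laplace string $h_0,\dots,h_N$ shifting by one per step'' is a transplant from the classical $\oo=2$ theory that does not reflect the $\oo=1$ structure: each type in the zoo carries its own relative and conditional invariants which only govern the branching into the three situations of \S\ref{S24}; there is no single string whose exhaustion stops the cascade, and termination is needed (and holds) in all cases via $\varkappa$, not only generically. Second, the generic differential inverse is not a recovery of $u$ from $v,\p_xv$: it is a transverse operator in $Y$ of order up to $k-1$, obtained (after prolonging to involutivity, type $kE_k$) by eliminating $Y^{k-2}u,\dots,Yu$ from a maximal-rank linear system -- the Young-diagram argument of \S\ref{S31}; this existence statement also requires proof. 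Finally, in the singular branch the theorem claims only reducibility to ODEs: finite type systems of order $\ge2$ are generally not integrable by quadratures (the paper's Riccati example), so ``integrated by the Frobenius theorem'' must not be read as closed-form integration -- only the order-one singular strata retain quadratures.
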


Thus a generic linear system of class $\oo=1$ has representation for the general solution
via a differential operator and mild nonlocality (quadrature) applied to an arbitrary function.
This is neither true for for $\oo\ge2$ (e.g. scalar 2nd order PDEs on the plane), nor for $\oo=0$
(e.g. linear 2nd order ODEs, equivalent to Riccati equation).

 \begin{rk}
This distinguishes $\oo=1$ linear systems, but does not extend to nonlinear $\oo=1$ class
systems, which will be discussed in \cite{K$_2$}.
 \end{rk}

Internal geometry of linear/linearizable systems is quite simple:
they correspond to Goursat distributions with growth vector (2,3,4,\dots).
On the other hand the external geometry (which is governed by the pseudogroup of
point triangular transformations) is rich and is characterized by
differential invariants (\cite{C$_1$} contains an example of two systems which
are equivalent internally but not externally, they have type $E_2+E_3$ in the notations of
\S\ref{S2}; E.Cartan proved that this is not possible for $2E_2$). Some of these appear
naturally in our approach, and are the analogs of the classical Laplace invariants.

In this paper we characterize class $\oo=1$ systems from the external viewpoint,
and the most important invariant will be the complexity $\varkappa$ of the system $\E$.
This is an integer-valued non-negative quantity which always decreases under Laplace
transformation. Some other quantities characterizing the size of the solutions space
$\op{Sol}(\E)$ are also relevant.

In the future work \cite{K$_2$}, we shall model reduction types, based on the normal forms of
rank 2 distributions. Cartan-Hilbert equation is a classical example of this general
construction for class $\oo=1$.

\subsection{Background and outline}\label{S02}

We will exploit the geometric theory of PDEs, namely jet-geometry and
Spencer formal theory \cite{S}. The reader is invited to consult \cite{KL$_1$}
or a short exposition in Appendix \ref{SB}.

Also the geometry of distributions will be occasionally used in a minor part
of the text. The structure of the paper is as follows.

In Section \ref{S1} we present a study of class $\oo=1$ linear systems with low-complexity.
We introduce the pseudogroup of frame/coordinate changes and calculate some relative invariants.
Then we define generalized Laplace transformations on phenomenological level as
differential substitutions, which simplify the system (and have differential inverses modulo the equation).

This latter condition means that the Laplace transformations decrease the complexity ---
the notion that is introduced in Section \ref{S2}.
There we also describe the totality (zoo) of all systems of class $\oo=1$
and discuss properties of the complexity.

The generalized Laplace transformations are rigorously defined in Section \ref{S3}.
Then we prove the main results about existence, uniqueness and effectiveness of Laplace transformations
for the linear case.

Section \ref{S4} discusses some features of the non-linear situation. Here new ideas are required:
integrable extensions, non-Moutard form of solutions etc. Some of these will be discussed in
forthcoming paper \cite{K$_2$}, so we restrict to several examples.

In Section \ref{S5} we present a short historical overview and give a brief discussion of
possibilities and difficulties of generalizations of the theory for systems of class $\oo>1$.

The appendices supplement some of the background material.


\medskip

\textsc{Acknowledgment.} It is a pleasure to thank Nail Ibragimov for his translation of
Sophus Lie paper \cite{L$_1$} in \cite{LG}, which was a starting point for this paper.
I am grateful to Valentin Lychagin for many discussions on the initial stage of the project.
The paper is strongly influenced by a collaboration with Ian Anderson, to whom I am thankful.
Hospitality of MLI (Stockholm) in 2007, MFO (Oberwolfach) in 2008,
Banach center (Warsaw) and Utah State University in 2009, as well as IHES in 2010
is acknowledged.

\section{Linear systems on plane with one simple characteristic}\label{S1}

Consider a compatible PDE system $\E$ of class $\oo=1$, meaning there is only
one characteristic\footnote{The characteristic covectors are defined as in
the formal theory of PDEs \cite{S,KL$_1$}, see Appendix \ref{SB}. For 2-dimensional
base they dualize to characteristic vectors (defined up to non-zero multiple).}
of multiplicity 1 (it can be taken $X=\p_x$
for linear and some quasi-linear systems, but not for a fully non-linear $\E$).

The simplest situation is one 1st order PDE, which is classically
known to be solvable via the method of characteristics. In general $\E$ is
generated by $r$ PDEs of possibly different orders and we list only
generators (disregard prolongations) of the system. The number of
these generators can be determined invariantly via the Spencer
$\delta$-cohomology groups: $r=\dim H^{*,1}(\E)$.

Starting from some jet-level we have for the symbol $\dim g_k=1$.
This imposes restriction on the form of the system and we shall
investigate them successively according to complicacy, which
includes the number $r$, orders of the PDEs and orders of the
compatibility conditions.

Spencer cohomology have dimensions $h^0=\dim H^{*,0}=1$, $h^1=\dim
H^{*,1}=r$ and from vanishing of the Euler characteristic\footnote{This holds for
all systems, reducible to ODEs (in our case $\oo=0,1$).
 } we find
$h^2=\dim H^{*,2}=h^1-h^0=r-1$. This is the number of compatibility
conditions our system $\E$ satisfies.

In this section we consider only linear PDEs in one (scalar) unknown $u=u(x,y)$,
in which case the characteristic is a vector field on the base $M=\R^2$.
We will denote it by $X$ and its complement by $Y$.

We take the freedom of considering $X,Y$ to be first order differential operators,
rather than vector fields (and still call it frame), and the symbol of $X$ is the characteristic.

The maximal transformation group $\mathcal{G}$ of this nonholonomic frame is
 $$
X\mapsto \kappa X+a,\quad Y\mapsto \varsigma Y+b X+c.
 $$
We can always arrange $[X,Y]=0$ and in doing so\footnote{With this the pseudogroup
is equivalent to the linear-triangular pseudogroup $x\mapsto\chi(x,y)$, $y\mapsto\gamma(y)$,
$u\mapsto\vartheta(x,y)u$ provided the characteristic is $\p_x$.}
we get functional dimension of the pseudogroup over the algebra $A=C^\infty(M)$ equal 2.

We however relax the requirement
to $\op{ord}[X,Y]=0$ (this is always achieved by a choice of $\kappa,\varsigma$)
and the pseudogroup with this condition has functional dimension $\op{func.dim}_A(\mathcal{G})=3$.

Generators of the pseudogroup $\mathcal{G}$ are the following transformations
 \begin{align*}
&X\mapsto X+a,\ Y\mapsto Y& &\text{(basic gauge)}\\
&X\mapsto e^\l X,\ Y\mapsto Y+bX+c& &\text{(auxiliary gauge)}\\
&X\mapsto \kappa X,\ Y\mapsto \varsigma Y& &\text{(auxiliary gauge)}
 \end{align*}
with $a,b,c\in C^\infty(M)$ arbitrary and $\l,\kappa,\varsigma\in C^\infty(M)$ satisfy
$[Y+bX,\l]=[X,b]$, $[X,\varsigma]=[Y,\kappa]=0$, i.e. $\l,\kappa,\varsigma$ are functions of
one argument (we write $[X,b]$ instead of $X(b)$ as $X$ is a 1st order differential operator
and similarly in other cases).

 \begin{rk}
Using bigger pseudogroup $\mathcal{G}$ instead of point transformations
is harder from the viewpoint of differential invariants, but
is more convenient from factorization viewpoint that is our main goal now.

Note that alternatively we could use the pseudogroup $\mathcal{G}'$: $X\mapsto X+a$, $Y\mapsto Y+bX+c$
(which satisfies the condition $[X,Y]=0\!\mod X+\ \text{order }0$) with $\op{func.dim}_A(\mathcal{G}')=3$,
achieving the same results.
 \end{rk}

Notice that the basic gauge transformations form a normal subgroup $\mathcal{H}$, the auxiliary
form a subgroup $\mathcal{K}$ and we get $\mathcal{H}\cdot\mathcal{K}=\mathcal{G}$. We'll be using the
following easy result:

 \begin{prop}\label{P1}
Let $\mathcal{G}=\mathcal{H}\cdot\mathcal{K}$ be a decomposition of a group into the product
of two subgroups the first of which is normal. Consider an action of $\mathcal{G}$ on a space $M$
(manifold in finite-dimensional case) and suppose there exists a global $\mathcal{H}$-transversal
subspace $L\subset M$ invariant with respect to $\mathcal{K}$. Then the space of invariant
functions $C^\infty(L)^\mathcal{K}$ pulled back to $M$ via $\mathcal{H}$-action coincides
with the space of $\mathcal{G}$-invariants:
 $$
C^\infty(M)^\mathcal{G}\simeq C^\infty(L)^\mathcal{K}.
 $$
 \end{prop}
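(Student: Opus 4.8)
The plan is to set up the natural maps between the three function spaces and verify they are mutually inverse. First I would fix the restriction map $\rho\colon C^\infty(M)\to C^\infty(L)$, $f\mapsto f|_L$, and the "flow-out" (pullback via $\mathcal{H}$) map in the other direction. The key structural input is that $L$ is a global $\mathcal{H}$-transversal: every point $m\in M$ lies on a unique $\mathcal{H}$-orbit meeting $L$ in exactly one point, so there is a well-defined projection $\pi\colon M\to L$ sending $m$ to that intersection point, and $\pi$ is $\mathcal{H}$-invariant by construction. Then $\pi^*\colon C^\infty(L)\to C^\infty(M)^{\mathcal{H}}$ is an isomorphism onto the $\mathcal{H}$-invariant functions, with inverse $\rho$ restricted to $C^\infty(M)^{\mathcal{H}}$ (since $\pi\circ\iota_L=\mathrm{id}_L$ for the inclusion $\iota_L$). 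This already identifies $C^\infty(M)^{\mathcal{H}}\simeq C^\infty(L)$; it remains to match the $\mathcal{K}$-actions on the two sides and then cut down to $\mathcal{K}$-invariants.

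The heart of the argument is showing that under this identification the residual $\mathcal{K}$-action on $C^\infty(M)^{\mathcal{H}}$ corresponds exactly to the $\mathcal{K}$-action on $C^\infty(L)$ coming from $\mathcal{K}$ preserving $L$. Because $\mathcal{H}$ is normal, $\mathcal{K}$ acts on the quotient of $M$ by $\mathcal{H}$-orbits, i.e. on $L$ (using transversality to identify this quotient with $L$); concretely, for $k\in\mathcal{K}$ and $m\in M$ one checks $\pi(k\cdot m)=k\cdot\pi(m)$, which follows from writing $m=h\cdot\pi(m)$ and using $k h k^{-1}\in\mathcal{H}$ together with $k$ preserving $L$. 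Hence $\pi$ is $\mathcal{K}$-equivariant, so $\pi^*$ intertwines the two $\mathcal{K}$-actions. A function $f\in C^\infty(M)$ is $\mathcal{G}$-invariant iff it is $\mathcal{H}$-invariant and $\mathcal{K}$-invariant (as $\mathcal{G}=\mathcal{H}\cdot\mathcal{K}$), i.e. iff $f=\pi^*g$ for some $g\in C^\infty(L)$ that is $\mathcal{K}$-invariant. Restricting the isomorphism $\pi^*$ to $\mathcal{K}$-invariants yields the claimed $C^\infty(M)^{\mathcal{G}}\simeq C^\infty(L)^{\mathcal{K}}$.

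I expect the main obstacle to be the equivariance bookkeeping in the second step: one must be careful that "$\mathcal{K}$ preserves $L$" is used in the right place, that normality of $\mathcal{H}$ is what lets the conjugate $khk^{-1}$ stay in $\mathcal{H}$, and that transversality (uniqueness of the intersection point) is what makes $\pi$ single-valued and hence the relation $\pi(k\cdot m)=k\cdot\pi(m)$ unambiguous. In the smooth finite-dimensional setting one should also note that $\pi$ is smooth (a consequence of $L$ being a genuine transversal submanifold and the $\mathcal{H}$-action being free and proper along it, or simply assumed as part of "global transversal"); in the pseudogroup/infinite-dimensional application all statements are to be read formally in jets, where the same algebra goes through verbatim. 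No compatibility or genuine analytic difficulty arises — the statement is, as the text says, "easy" — so the proof is essentially this diagram chase.
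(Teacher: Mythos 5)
Your argument is correct: the projection $\pi\colon M\to L$ along $\mathcal{H}$-orbits is well defined by transversality, its $\mathcal{K}$-equivariance follows exactly as you say from normality of $\mathcal{H}$ and $\mathcal{K}$-invariance of $L$, and combining $\pi^*\colon C^\infty(L)\simeq C^\infty(M)^{\mathcal{H}}$ with the fact that $\mathcal{G}$-invariance is equivalent to $\mathcal{H}$- and $\mathcal{K}$-invariance (since $\mathcal{G}=\mathcal{H}\cdot\mathcal{K}$) gives the claim. The paper states this as an ``easy result'' without proof, and your diagram chase is precisely the intended argument, so nothing further is needed beyond the regularity caveat on $\pi$ that you already flag.
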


Now we consider low complexity linear systems of $\oo=1$ type to illustrate the general picture
that we'll sum up later.

\subsection{Two equations of the 2nd order: ${\bf 2E_2}$}\label{S11}

Consider the case of two PDEs ($r=2$) on the plane $M$. As the common characteristic
is\footnote{We write just $X$ instead of more appropriate $\op{symb}(X)$.} $X$,
the equations have symbols $X^2$ and $YX$ (it will be more convenient to write this order
for calculations later on). Thus our system $\E$ is
 $$
\left\{ \begin{array}{l}
 \,X^2\,u+a_1Xu+b_1Yu+c_1u=0,\\
 YXu+a_2Xu+b_2Yu+c_2u=0.
\end{array}\right.
 $$

This system has one compatibility condition since $h^2=1$. It
implies $b_1=0$. By the basic gauge (absorbing functions into $X$)
we transform the system to have $b_2=0$. Then compatibility implies
$c_1=0$.

Namely we have $X\mapsto\tilde X=X+b_2$, and for the transformed $2E_2$
we have $\tilde b_2=\tilde c_1=0$ and
 $$
\tilde c_2=c_2-a_2b_2-[Y,b_2]
 $$
(compare with the classical Laplace invariants, see Appendix \ref{SA}). This $\tilde c_2$
is a relative differential invariant with respect to auxiliary gauge,
and so is a relative invariant of the pseudogroup $\mathcal{G}$ action
(here and in what follows we use Proposition \ref{P1}).

Two sub-cases are possible (we are omitting tildes from now on).

\smallskip

{$\bold \Upsilon_{22}^{{\rm a}}:$} $c_2\ne0$. Then differential
substitution\footnote{Such substitutions, simplifying $\E$, will be called
Laplace transformations.} $v=Xu$ leads us to one first order equation
(essentially parametrized ODE):
 \begin{equation}\label{2E2A1}
(X+a_1)v=0.
 \end{equation}
The second PDE in $\E$ yields the inversion formula
$u=-c_2^{-1}(Y+a_2)v$. Note that in this case the inversion is
differential.

The equation $[Xc_2^{-1}(Y+a_2)+1]v=0$ comes from composition of the
substitution and the inversion, but it is a differential corollary
of (\ref{2E2A1}) due to compatibility of $\E$.

\smallskip

{$\bold \Upsilon_{22}^{{\rm b}}:$} $c_2=0$. Then differential
substitution $v=Xu$ leads us to the Frobenius system:
 \begin{equation}\label{2E2A2}
(X+a_1)v=0,\quad (Y+a_2)v=0,
 \end{equation}
which is compatible (due to compatibility of $\E$) and has
1-dimensional vector space of smooth solutions.

However inversion in this case is integral $u=X^{-1}v$ (note however
that this is a parametrized ODE) providing one unknown function for
the general solution $u$ via essentially one solution of (\ref{2E2A2}).

\subsection{Two equations of the 2nd and 3rd orders: ${\bf E_2+E_3}$}\label{S12}

In this case ($r=2$) a common characteristic condition leads us to two
cases. Since we consider first the lower order PDEs, prolong them,
add new equations and so on, the general position case will be such
that the lower order symbols are in general position. The first of
the cases below is of general position, the second as we shall see
has degeneracy.

\medskip

{$\bold \Upsilon_{23}^1:$} The symbols of the PDEs are $YX$ and
$X^3$. Thus the system $\E$ has the form:
 $$
\left\{ \begin{array}{l}
 YXu+c_1Xu+d_1Yu+e_1u=0,\\
 \,X^3u+a_2X^2u+b_2Y^2u+c_2Xu+d_2Yu+e_2u=0.
\end{array}\right.
 $$
The compatibility condition yields $b_2=d_2=0$ and with the basic
gauge we get $d_1=0$. Then compatibility leads to $e_2=0$.

After this transformation the new coefficient $e_1$ (which is
 $$
\tilde e_1=e_1-c_1d_1-[Y,d_1]
 $$
via the old coefficients and we remove tilde; such precise formulae will be omitted in the future cases)
is a relative invariant with respect to auxiliary gauges
(here the pseudogroup $\mathcal{G}$ is smaller as the symbol of $\E$ fixes directions of
both $X$ and $Y$ thus reducing some of auxiliary gauges), and consequently with respect to $\mathcal{G}$.

The following sub-cases occur:

\smallskip

{$\bold \Upsilon_{23}^{1{\rm a}}:$} $e_1\ne0$. The substitution
$Xu=v$ has differential inverse from the first equation:
$u=-e_1^{-1}(Y+c_1)v$.

The second PDE of $\E$ leads  to a 2nd order PDE for $v$ with symbol
$X^2$. Inserting the inversion into the substitution we get another
PDE with symbol $YX$. The obtained system $2E_2$ is:
 $$
\left\{ \begin{array}{l}
 \bigl(X^2+a_2X+c_2\bigr)v=0,\\
 \bigl(Xe_1^{-1}(Y+c_1)+1\bigr)v=0.
\end{array}\right.
 $$

\smallskip

{$\bold \Upsilon_{23}^{1{\rm b}}:$} $e_1=0$. Then the substitution
$v=Xu$ yields two PDEs, one of the 1st and the other of the 2nd
order, but this system is of Frobenius type (and compatible):
 $$
(Y+c_1)v=0,\quad (X^2+a_2X+c_2)v=0.
 $$
The inversion is however integral: $u=X^{-1}v$.

\medskip

{$\bold \Upsilon_{23}^2:$} The symbols of the PDEs are $X^2$ and
$Y^2X$. Thus the system $\E$ has the form:
 $$
\left\{ \begin{array}{l}
 \,\,X^2\,  u+c_1Xu+d_1Yu+e_1u=0,\\
 Y^2Xu+a_2YXu+b_2Y^2u+c_2Xu+d_2Yu+e_2u=0.
\end{array}\right.
 $$
Compatibility condition yields $d_1=0$ and with the basic gauge we
obtain $b_2=0$. Then the compatibility leads to $e_1=0$.

The new coefficient $d_2$ is a relative differential invariant and $e_2$
is a relative invariant provided $d_2=0$. The following are the sub-cases:

 \comm{
They are expressed via the old coefficients as
 $$
\tilde d_2=d_2-a_2b_2-2[Y,b_2],\quad \tilde e_2=e_2-c_2b_2-a_2[Y,b_2]-[Y,[Y,b_2]]
 $$
(and again we remove tilde)
 }

\smallskip

{$\bold \Upsilon_{23}^{2{\rm a}}:$} $d_2\ne0$. The substitution
$v=Xu$ to the first PDE yields the first order equation
 \begin{equation}\label{E2E3A1}
(X+c_1)v=0.
 \end{equation}
But the inversion is given by the solution of the compatible
Frobenius system formed by the substitution and the second PDE:
 $$
Xu=v,\quad (Y+d_2^{-1}e_2)u=-d_2^{-1}(Y^2+a_2Y+c_2)v.
 $$
Let us remark that integration of this system adds not only one constant, but
also one derivative to the arbitrary function of 1 variable coming from the general
solution of (\ref{E2E3A1}).

\smallskip

{$\bold \Upsilon_{23}^{2{\rm b}}:$} $d_2=0,e_2\ne0$. Here $v=Xu$ has
the differential inversion $u=-e_2^{-1}(Y^2+a_2Y+c_2)v$. First order
PDE (\ref{E2E3A1}) remains the transformed equation with this
substitution.

It seems that insertion of the inversion to the substitution leads
to a 3rd order PDE with the symbol $Y^2X$, but it is a differential
corollary of (\ref{E2E3A1}) due to compatibility of $\E$. Thus we
get only one first order PDE.

\smallskip

{$\bold \Upsilon_{23}^{2{\rm c}}:$} $d_2=0,e_2=0$. Here substitution
$v=Xu$ leads to the Frobenius system of the 1st and 2nd order
equations:
 $$
(X+c_1)v=0,\quad (Y^2+a_2Y+c_2)v=0.
 $$
The inversion operator $u=X^{-1}v$ is integral.

\subsection{Three equations of the 3rd order: ${\bf 3E_3}$}\label{S13}

In this case ($r=3$) $\E$ has one single characteristic $X$ iff its form is:
 $$
\left\{ \begin{array}{l}
 \,\,\,X^3\,u+a_1X^2u+b_1YXu+c_1Y^2u+d_1Xu+e_1Yu+f_1u=0,\\
 YX^2u+a_2X^2u+b_2YXu+c_2Y^2u+d_2Xu+e_2Yu+f_2u=0\\
 Y^2Xu+a_3X^2u+b_3YXu+c_3Y^2u+d_3Xu+e_3Yu+f_3u=0.
\end{array}\right.
 $$

Compatibility implies $c_1=c_2=0$ and the basic gauge yields
$c_3=0$. With this the compatibility gives: $e_1=e_2=0$.

The new coefficients $e_3$, $b_1$, $f_1$ are relative differential invariants
(under auxiliary gauge and so under $\mathcal{G}$), as well as $f_2(\text{upon }f_1=0)$,
$f_3(\text{upon }f_1=f_2=0)$ (which are conditional invariants).

Compatibility ties them so: $f_1=b_1e_3$, $f_2=b_2e_3+[X,e_3]$.
The following are the cases:

\smallskip

{$\bold \Upsilon_{333}^{{\rm a}}:$} $e_3\ne0,b_1\ne0$ $\Leftrightarrow f_1\ne0$.
The fact that $b_1\ne0$ implies that the first two
equations are not compatible\footnote{The term "compatible" means "formally integrable"
\cite{KL$_1$}, and is more restrictive than (formally) "solvable", see Appendix \ref{SB}.}
in itself, and that their compatibility condition
implies the third PDE of $\E$. Thus we can discard the latter.

Substitution $v=Xu$ can be inverted from the first PDE:
$u=-f_1^{-1}(X^2+a_1X+b_1Y+d_1)v$.

A linear combination of the second and the first PDEs from $\E$
yields a 2nd order equation on $v$. Another equation is of the third
order and is obtained by inserting the inversion into the
substitution. Thus we get a system on $v$ of type $E_2+E_3$ ($\Upsilon_{23}^1$):
 $$
\left\{ \begin{array}{l}
\left((YX+a_2X+b_2Y+d_2)-f_1^{-1}f_2(X^2+a_1X+b_1Y+d_1)\right)v=0,\\
\left(Xf_1^{-1}(X^2+a_1X+b_1Y+d_1)+1\right)v=0.
\end{array}\right.
 $$
The symbols of these equations after an auxiliary gauge are $YX,X^3$
and the system is compatible.
 \comm{
Remark that the inversion has differential order 2 totally, but order 1 in $Y$,
so that only 1 derivative is added and the complexity of the general solution increases by 1
with the inverse Laplace transformation.
 }

\smallskip

{$\bold \Upsilon_{333}^{{\rm b}}:$} $e_3\ne0$, $f_1=0$ $\Rightarrow b_1=0$.
Suppose first $f_2\ne0$.

Substitution $v=Xu$ can be inverted from the second PDE:
$u=-L[v]$, where $L=f_2^{-1}(YX+a_2X+b_2Y+d_2)$.

Then the first PDE of $\E$ and insertion of the inversion to the third PDE
yield the compatible system on $v$ of type $E_2+E_3$ ($\Upsilon_{23}^2$):
 $$
 \left\{ \begin{array}{l}
(X^2+a_1X+b_1Y+d_1)v=0,\\
((e_3Y+f_3)L-(Y^2+a_3X+b_3Y+d_3)) v=0.
\end{array}\right.
 $$

\smallskip

{$\bold \Upsilon_{333}^{{\rm c}}:$} $e_3\ne0$, $f_1=0$, $b_1=0$, $f_2=0$.
The substitution $v=Xu$ turns the first two PDEs of $\E$ into $2E_2$:
 $$
(X^2+a_1X+d_1)v=0,\quad (YX+a_2X+b_2Y+d_2)v=0,
 $$
with the inversion found from the compatible system of Frobenius type:
 $$
Xu=v,\quad (Y+e_3^{-1}f_3)u=-e_3^{-1}(Y^2+a_3X+b_3Y+d_3)v.
 $$

\smallskip

{$\bold \Upsilon_{333}^{{\rm d}}:$} $e_3=0$, implying $f_1=f_2=0$.
Further exploration of compatibility implies $b_1f_3=0$. Assume first $f_3\ne0$ $\Rightarrow b_1=0$.

Then the third PDE of $\E$ implies inverse $u=-f_3^{-1}(Y^2+a_3X+b_3Y+d_3)v$ for the substitution $v=Xu$.
The first two PDEs of $\E$ yield the same $2E_2$ as in $\Upsilon_{333}^{{\rm c}}$.

\smallskip

{$\bold \Upsilon_{333}^{{\rm e}}:$} $e_3=0$, $f_1=f_2=f_3=0$.
Then substitution $v=Xu$ leads to the Frobenius system
 $$
 \left\{ \begin{array}{l}
(X^2+a_1X+b_1Y+d_1)v=0,\\
(YX+a_2X+b_2Y+d_2)v=0,\quad (Y^2+a_3X+b_3Y+d_3)v=0.
\end{array}\right.,
 $$
with the integral inverse $u=X^{-1}v$.

\subsection{Two equations of the 3rd order: ${\bf 2E_3}$}\label{S14}

This case ($r=2$) is a bit more complicated since the compatibility
conditions are given by operators of the 2nd order. The
characteristic will again be denoted by $X$. Thus the two symbols
have the form $X\cdot Q_1,X\cdot Q_2$, where $Q_i$ are linearly
independent quadrics on $T^*M$, i.e. elements of $S^2TM$.

Denote by $\Pi^2=\langle Q_1,Q_2\rangle$ the plane in $S^2TM$. Let
$q\in S^2T^*M$ be a non-zero element of $\op{Ann}(\Pi^2)$ and
$\omega_Y\in T^*M$ a non-zero element of $\op{Ann}(X)$. The quadric
$q$ on $TM$ is not a square, since otherwise $Q_1$ and $Q_2$ would
have a common factor, which would be another characteristic.

Depending on the position of $q$ regarding the cone of degenerate
quadrics and the line $\omega_Y^2$ we obtain the following two
different possibilities (classification over $\C$ is by one case
smaller) with $\omega_X$ being a covector, complementary to
$\omega_Y$:
 $$
q=\omega_X^2\mp\omega_Y^2\quad\text{ or }\quad q=\omega_X\omega_Y.
 $$
The corresponding normal forms for the plane $\Pi^2$ generate the
normal forms of the symbol of $\E$. They fix the direction of $Y$,
and the auxiliary group becomes smaller (reduction of $\mathcal{G}$
similar to the case $\Upsilon_{23}^{1}$).

\medskip

{$\bold \Upsilon_{33}^{1}:$} The symbols of the two equations are
$(X^2\pm Y^2)X,YX^2$. The equation has the form
 $$
 \!\left\{ \begin{array}{l}
\ \ \ \ \ YX^2\,u\ \ \ +a_1X^2u+b_1YXu+c_1Y^2u+d_1Xu+e_1Yu+f_1u=0,\\
\!\!\!(X^2\pm Y^2)Xu+a_2X^2u+b_2YXu+c_2Y^2u+d_2Xu+e_2Yu+f_2u=0.
\end{array}\right.
 $$
Compatibility implies that $c_1=0$. The basic gauge yields $c_2=0$
and then compatibility gives $e_1=0$.

Then the new functions $e_2,b_1,f_1$ are relative invariants and
the compatibility ties them as $f_1=b_1e_2$; $f_2$ is a conditional invariant on $e_2=0$.

\smallskip

{$\bold \Upsilon_{33}^{1{\rm a}}:$} $f_1\ne0$ $\Rightarrow$ $e_2\ne0$.
Substituting $Xu=v$ we get the differential inversion $u=-L_1[v]$,
$L_1=f_1^{-1}(YX+a_1X+b_1Y+d_1)$, which being inserted into the
second PDE of $\E$ and the substitution gives two differential
equations, with the notation $L_2=X^2\pm Y^2+a_2X+b_2Y+d_2$:
 \begin{equation}\label{13nov2006a}
 \left\{ \begin{array}{l}
((e_2Y+f_1)L_1+L_2)v=0,\\
(XL_1+1)\,v=0.
\end{array}\right.
 \end{equation}
This system is however not compatible. Indeed, we obtained it with
the help of inversion from the system $Xu=v,(e_2Y+f_2)u=-L_2[v]$.

Writing the compatibility condition for this system, i.e.
multiplying from the left by $(Y+\rho_1)$ and by $(-X+\rho_2)$
respectively and adding, we obtain an equation of the type
$\theta\cdot u=L_3[v]$, where $L_3=XL_2+\dots$ and $\theta\ne0$ because
the compatibility conditions for $\E$ are given via differential operators
of the second (not by first!) order.

Combining the latter equation with the PDE $u=-L_1[v]$ we get a 3rd
order differential equation on $v$ with the symbol $(X^2\pm Y^2)X$,
which is the compatibility condition for system (\ref{13nov2006a}).
Finally taking linear combinations of the derived PDEs we obtain the
system of type $3E_3$ on $v$ with the symbols $X^3,YX^2,Y^2X$,
which is now compatible due to compatibility of $\E$.

\smallskip

{$\bold \Upsilon_{33}^{1{\rm b}}:$} $f_1=0$, $e_2\ne0$. Then
substitution $Xu=v$ together with $(e_2Y+f_2)u=-L_2[v]$ has
Frobenius type and its compatibility condition gives a 3rd order PDE
on $v$. Another PDE on $v$ is given by the first PDE of $\E$ and it
is has 2nd order. This pair $E_2+E_3$ on $v$ has symbols $YX,X^3$
($\Upsilon_{23}^{1}$) and is compatible. The inversion is however
Frobenius.

\smallskip

{$\bold \Upsilon_{33}^{1{\rm c}}:$} $f_1=0$, $e_2=0$, $f_2\ne0$. Then we can get
differential inversion from the second PDE of $\E$ $u=-f_2^{-1}L_2[v]$.
The first PDE of $\E$ becomes the 2nd order PDE $(YX+a_2X+b_2Y+d_2)v=0$, while
substitution of the inversion into $Xu=v$ yields $(Xf_2^{-1}L_2+1)(v)=0$,
which modulo the previous PDE is a 3rd order equation with symbol $X^3$. This is a compatible system
of type $E_2+E_3$ ($\Upsilon_{23}^{2}$).

\smallskip

{$\bold \Upsilon_{33}^{1{\rm d}}:$} $f_1=0$, $e_2=f_2=0$. The
substitution $v=Xu$ reduces the system to a pair of compatible 2nd
order PDEs with symbols $X^2\pm Y^2,YX$. Hence the system is of
finite (Frobenius) type and the substitution has integral inverse.

\medskip

{$\bold \Upsilon_{33}^{2}:$} The symbols of the two equations are
$X^3,Y^2X$ and so the system has the form
 $$
 \left\{ \begin{array}{l}
\ \ X^3\,u+a_1X^2u+b_1YXu+c_1Y^2u+d_1Xu+e_1Yu+f_1u=0,\\
Y^2Xu+a_2X^2u+b_2YXu+c_2Y^2u+d_2Xu+e_2Yu+f_2u=0.
\end{array}\right.
 $$
By compatibility $c_1=0$, basic gauge gives $c_2=0$, then
compatibility implies $e_1=0$.

Then the new functions $e_2,b_1,f_1$ are relative invariants and
the compatibility ties them as $f_1=b_1e_2$; $f_2$ is a conditional invariant on $e_2=0$.

Let $L_1=X^2+a_1X+b_1Y+d_1$, $L_2=Y^2+a_2X+b_2Y+d_2$.

\smallskip

{$\bold \Upsilon_{33}^{2{\rm a}}:$} $f_1\ne0$. The inversion is differential $u=-f_1^{-1}L_1[v]$
(from 1st PDE of $\E$). Insertion of this into the 2nd PDE of $\E$ and into substitution $v=Xu$,
together with addition of compatibility conditions  to the last two
leads to a compatible system $3E_3$ with symbols $X^3,YX^2,Y^2X$.

\smallskip

{$\bold \Upsilon_{33}^{2{\rm b}}:$} $f_1=0$, $e_2\ne0$. The first PDE on $v$ is $L_1[v]=0$.
The inversion can be found from the system
 \begin{equation}\label{pty}
Xu=v,\ (Y+e_2^{-1}f_2)u=-e_2^{-1}L_2v.
 \end{equation}
The compatibility of this system has the form $(Xe_2^{-1}L_2+\dots)v=ku$.

In the case $k=0$ we get the 3rd order PDE on $v$ with symbol $Y^2X$, so the reduced system is
$E_2+E_3$ ($\Upsilon_{23}^2$). The inversion is a Frobenius system.

If $k\ne0$, then the inversion is differential $u=k^{-1}(Xe_2^{-1}L_2+\dots)v$.
Substituting this into the 2nd PDE of (\ref{pty}) yields an equation of 4th order with symbol $Y^3X$.
Thus the reduction is of type $E_2+E_4$.

\smallskip

{$\bold \Upsilon_{33}^{2{\rm c}}:$} $f_1=e_2=0$, $f_2\ne0$.
The inversion is differential $u=-f_2^{-1}L_2[v]$. The first PDE on $v$ is $L_1[v]=0$
and the second is obtained by insertion of the inversion into substitution $Xu=v$.
The symbols of the obtained equations are $X^2,Y^2X$.
They are compatible and have type $E_2+E_3$ ($\Upsilon_{23}^2$).

\smallskip

{$\bold \Upsilon_{33}^{2{\rm d}}:$} $f_1=0$, $e_2=0$, $f_2=0$.
The reduction is a Frobenius system of the second order $L_1[v]=0$, $L_2[v]=0$
and the inversion is integral $u=X^{-1}v$.

\subsection{Examples of calculations}\label{S15}
Let us illustrate integration of linear $\oo=1$ systems for some representative types discussed above.

 \comm{
 \begin{multline*}
u_{xxx} = -\tfrac{2(5x+6y)}{x(x+y)}\,u_{xx} -\tfrac8y\,u_{xy}
-{\tfrac{2(18y^4+34xy^3+21x^2y^2+8x^3y+4x^4)}{x^2y^2(x+y)^2}}\,u_x -\tfrac{24(x+y)}{x^3y}\,u,\\
u_{xxy} = -\tfrac{(x^2+xy+y^2)}{x(x+y)y}\,u_{xx} -{\tfrac{(2x+3y)}{x(x+y)}}\,u_{xy} -{\tfrac{(8x^2y+2x^3+
 5y^3+9xy^2)}{x^2y(x+y)^2}}\,u_x,\\
 u_{xyy} = {\tfrac{y}{8x^2}}\,u_{xx} - {\tfrac{(2x^3+2x^2y+7xy^2+3y^3)}{2x^2y(x+y)}}\,u_{xy} -{\tfrac{3(x+y)}{x^3}}\,u_y{}\hfill\\
{}\hfill +{\tfrac{(-11xy^4+8x^5+16x^4y-22x^2y^3-20x^3y^2-y^5)}{8x^3y^2(x+y)^2}}\,u_x -{\tfrac{3(x+y)}{x^4}}\,u.
 \end{multline*}

It reduces by 3 differential substitution to the one 1st order PDE
$u_x=0$. The composition of inversion formulae gives the solution:
 $$
u=\frac8y\, f'''(y) + \frac{12(2x+y)}{x^2y}\, f''(y)+
 \frac{8(3x+2y)}{x^3y}\,f'(y)+ \frac{2(4x+3y)}{x^4y}\,f(y).
 $$
 }

\medskip

\underline{Example 1.} The following compatible system has type $3E_3$ ($\Upsilon^a_{333}$).
 \begin{multline*}
u_{xxx} = \tfrac{3x+6}{x^2}\,u_{xx}+\tfrac{6y}{x^3}\,u_{xy}+\tfrac{2x-12}{x^3}\,u_x-\tfrac{18}{x^3}\,u\\
u_{xxy} = -\tfrac{4x^2+6x+18}{3xy}\,u_{xx}-\tfrac{6}{x^2}\,u_{xy}+\tfrac{8x^2-6x+36}{3x^2y}\,u_x+\tfrac{18}{x^2y}\,u\\
 u_{xyy} = \tfrac{2x^3+9x^2+45x+54}{9y^2}\,u_{xx}+\tfrac{-5x^2+12x+18}{3xy}\,u_{xy}\\
 +\tfrac{-16x^3+9x^2-36x-108}{9xy^2}\,u_x+\tfrac{3}{y}\,u_y+\tfrac{4x^2-9x-18}{xy^2}\,u
 \end{multline*}

It reduces by differential substitutions to the one 1st order PDE
$u_x=0$ as follows $3E_3\to E_2+E_3\to 2E_2\to E_1$.
The branch is generic, i.e. in each type we have a generic case.

We call the above arrows (generalized) \emph{Laplace transformations}.
They all have differential inverses.
The composition of inversion formulae gives the solution:
 $$
u=9y^3f'''(y)+27xy^2f''(y)+36x^2yf'(y)+16x^3f(y).
 $$

\medskip

\underline{Example 2.} Another compatible $3E_3$ of type $\Upsilon^a_{333}$ is
 \begin{multline*}
u_{xxx} = \tfrac{3x-2y}{x^2}\,u_{xx}+\tfrac{2y}{x^2}\,u_{xy}
   -\tfrac{6}{x^2}\,u_x-\tfrac{6y}{x^3}\,u_y+\tfrac{6}{x^3}\,u\\
u_{xxy} = \tfrac{4x-2y}{x^2}\,u_{xx}+\tfrac{2x^2+2xy}{x^3}\,u_{xy}
   -\tfrac{6}{x^2}\,u_x-\tfrac{6y}{x^3}\,u_y+\tfrac{6}{x^3}\,u\qquad\qquad \\
u_{xyy} = \tfrac{4x-2y}{x^2}\,u_{xx}+\tfrac{2x^2+2xy}{x^3}\,u_{xy}+\tfrac{3}{x}\,u_{yy}
   -\tfrac{6}{x^2}\,u_x-\tfrac{6y}{x^3}\,u_y+\tfrac{6}{x^3}\,u
 \end{multline*}

Now the route of Laplace transformations is $3E_3\to E_2+E_3\to 2E_2$ and it is not generic,
as $E_2+E_3$ has type $\Upsilon^2_{23}$, and $2E_2$ has type $\Upsilon_{22}^{{\rm b}}$, so
this reduces to a Frobenius system of the 1st order and the inverse is integral.
The composition of inversion formulae gives the solution:
 $$
u=x^3 f''(y)-6 x^2 f'(y)+6 x f(y)+C y.
 $$

\medskip

\underline{Example 3.}
Finally consider a system of type $E_2+E_3$, which has inverse Laplace transformation of
the Frobenius ($\oo=0$) type.
 $$
u_{xx}=0,\quad u_{xyy}=\frac{x}{y}u_{xy}-\frac1yu_y.
 $$
Laplace transformation $v=u_x$ reduces this to the equation $v_x=0$, but the inverse is given by
the compatible system
 $$
\left\{\begin{array}{l}
u_x=v,\\ u_y=xv_y-yv_{yy}.
\end{array}\right.
 $$
General solution to equation $E_1$ on $v$ together with integration of the Frobenius system yield
 $$
u=(x+1)\vp(y)-y\vp'(y)+C.
 $$

\begin{rk}
In Examples 2 and 3 we displayed the "happy" cases when the quadratures are expressed in the closed form.
Generally the solution of a Frobenius system cannot even be reduced to the quadratures.
\end{rk}

\section{Classification of types of $\oo=1$ systems}\label{S2}

In this section we split the totality of $\oo=1$ systems into classes, and
discuss transformations between them.
Most results of this section are general and apply equally well to
non-linear $\oo=1$ systems.

\subsection{Zoo for $\oo=1$}\label{S21}

Let us compose a table, where we put compatible systems $\E$ of class
$\oo=1$, and organize its columns by the maximal order $k_\text{\rm max}$
and its rows by the number of equations $r=h^1=\dim H^{*,1}(\E)$.

We introduce the following rule for the choice of generators of $\E$.

Consider the orders of the system: $k_\text{min}=k_1\le\dots\le k_r=k_\text{max}$,
and denote the multiplicities by $m_i=\{\# j: k_j=i\}=\dim H^{i-1,1}(\E)$.
Thus $\E$ is given by $m_{k_1}$ equations $F_{k_1,1},\dots,F_{k_1,m_1}$ of order $k_1$, \dots,
$m_{k_r}$ equations $F_{k_r,1},\dots,F_{k_r,m_r}$ of order $k_r$.

We shall write $\E$ symbolically as $\sum\limits_{i=1}^rE_{k_i}=\sum m_iE_i$,
and call the latter \emph{the type} of $\E$ (implicit: $\oo=1$ common characteristic and compatibility).
The following table shows types for systems of order $k_\text{\rm max}\le 5$.

\

 {
\hskip+25pt
 \begin{tabular}{|c||c|c|c|c|c|}
\hhline{|-||-|-|-|-|-|}
 \begin{minipage}[3]{30pt}
 \ \\ $\underrightarrow{\ k_\text{\rm max}\ }$ \\ $\downarrow h^1\vphantom{\frac{A^A}{A_A}}$ 
\end{minipage} & $1$ & $2$ & $3$ & $4$ & $5$ \\
\hhline{=::=====}
  \begin{minipage}[0]{5pt} ${\ }^{\ }$ \\ 1 \\ \end{minipage}
 & \ $E_1$ \ &  \multicolumn{4}{c|}{} \\
\hhline{=::=====}
  \begin{minipage}[0]{5pt} ${\ }^{\ }$ \\ 2 \\ \end{minipage}
 &  & \,$2E_2$\, &
 \!\!\!${\begin{array}{c} E_2+E_3 \\ 2E_3 \end{array}}$\!\!\!
 & \!\!\!${\begin{array}{c} E_2+E_4 \\ E_3+E_4 \\ 2E_4 \end{array}}$\!\!\!
 & \!\!\!${\begin{array}{c} E_2+E_5 \\ E_3+E_5 \\ E_4+E_5 \\ 2E_5 \end{array}}$\!\!\!
 \\
\hhline{=::~====}
  \begin{minipage}[0]{5pt} ${\ }^{\ }$ \\ 3 \\ \end{minipage}
 & \multicolumn{2}{c|}{ } & $3E_3$ &
 \!\!\!${\begin{array}{c} 2E_3+E_4 \\ E_3+2E_4 \\ 3E_4 \end{array}}$\!\!\!
 & \renewcommand{\arraystretch}{1.1}
 \!\!\!${\begin{array}{c} 2E_3+E_5 \\ E_3+E_4+E_5 \\ 2E_4+E_5
 \\ E_3+2E_5 \\ E_4+2E_5 \\ 3E_5 \end{array}}$\!\!\! \\
\hhline{=::~~===}
  \begin{minipage}[0]{5pt} ${\ }^{\ }$ \\ 4 \\ \end{minipage}
 & \multicolumn{3}{c|}{} & $4E_4$ &
 \renewcommand{\arraystretch}{1.1}
 \!\!\!${\begin{array}{c} 3E_4+E_5 \\ 2E_4+2E_5 \\ E_4+3E_5 \\
 4E_5 \end{array}}$\!\!\! \\
\hhline{=::~~~==}
 \begin{minipage}[0]{5pt} ${\ }^{\ }$ \\ 5 \\ \end{minipage}
 & \multicolumn{4}{c|}{} & $5E_5$ \\
\hhline{|-||-|-|-|-|-|}
 \end{tabular}
 }

\

The general (infinite) table is upper-triangular and the first row consists
of one element only.

The generalized Laplace transformations are arrows between the species in this table, which are directed
toward the top-left corner. Here are sample routes for successive Laplace transformations:
 $$
\dots\to E_2+E_6\to E_2+E_5\to E_2+E_4\to E_2+E_3\to 2E_2\to E_1
 $$
and more complicated:
 \begin{multline*}
\dots\to 2E_4\to 2E_4+E_5\to 3E_4\to 4E_4\to E_3+2E_4\to\\
\to 2E_3+E_4\to 3E_3\to E_2+E_3\to E_2\to E_1.
 \end{multline*}

We will discuss the arrows in more details in Section \ref{S24}.

\subsection{Complexity}\label{S22}

The type notations introduced above suppress a lot of information about $\E$ (branching into sub-cases),
like its symbol class, the order of compatibility conditions etc.
We shall introduce an important integer, as a combination of these data,
characterizing the complexity of a compatible system $\E$ of class $\oo=1$.

Let $g_i$ be the symbols of $\E$ (see Appendix \ref{SB} for details).
Starting from some jet-level the dimensions of these subspaces stabilize: $\dim g_i=1$ for $i\gg1$.

 \begin{dfn}
Complexity of $\E$ is the number $\varkappa=\sum\limits_{i=0}^\infty(\dim g_i-1)$.
 \end{dfn}

The following inequality holds for class $\oo=1$ compatible systems:
$r=h^1\le k_\text{min}\le k_\text{max}$.

 \begin{prop}We have:
$\varkappa\le\sum\limits_{i=1}^r(k_i-i)+(k_1-r)\cdot(k_r-r)$.
Equality is attained for the 'boundary' cases: $r=2$ and $r=k_\text{\rm min}$ $(\Rightarrow$ $r=k_\text{\rm max})$.
 \end{prop}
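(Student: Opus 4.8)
The plan is to bound the complexity $\varkappa=\sum_{i\ge0}(\dim g_i-1)$ by analyzing how the dimensions $\dim g_i$ of the symbols can grow. The key structural facts are: $\dim g_0=1$ always (one dependent variable), $\dim g_i$ is weakly increasing until the system becomes involutive and then stabilizes at $1$ (since $\oo=1$), and at each order the drop in dimension is governed by the generators and compatibility conditions. Concretely, write $d_i=\dim g_i$; then $d_i-d_{i+1}$ cannot exceed the number of independent symbol-relations available at order $i+1$, which in turn is controlled by $m_{i+1}$ (the number of order-$(i+1)$ generators) together with the bound $\dim S^{i+1}T^*M=i+2$ on the ambient space of $(i{+}1)$-symbols in two base variables. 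First I would set up this bookkeeping carefully: for a compatible $\oo=1$ system the symbol sequence goes $1,2,3,\dots$ up through the subprincipal range, then is cut down by the generators (first at order $k_1$, and successively), and must terminate at $1$ once involutive; the maximal complexity is achieved by making the ``descent'' to dimension $1$ as slow as possible while respecting the positions $k_1\le\dots\le k_r$ of the $r$ generators.

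The core estimate I would carry out is: $\varkappa$ equals the total area between the graph of $d_i$ and the constant $1$. Before order $k_1$ we have $d_i=i+1$ (the symbol is the full $S^iT^*M$ restricted by the single characteristic condition, giving dimension $1$ off the characteristic — actually dimension $i+1$ before any equation is imposed is the generic count, and the single characteristic forces eventual stabilization at $1$), contributing $\sum(i)=\binom{k_1}{2}$-type terms up to the first generator; after that each generator of order $k_j$ removes capacity, and between consecutive generator-orders the symbol can at best stay constant or decrease by one per step. Summing the maximal possible plateau lengths and matching them against the constraint that after the last generator (order $k_r$) the dimension must have reached $1$ and the generators number $r$ gives exactly $\sum_{i=1}^r(k_i-i)+(k_1-r)(k_r-r)$: the first sum accounts for the ``staircase'' contributions of the individual generators relative to their index, and the product term $(k_1-r)(k_r-r)$ is the rectangular block of extra area available when the lowest generator sits high (at order $k_1>r$) and the highest generator extends the plateau out to order $k_r$. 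I would verify the two boundary cases directly: for $r=2$ the bound should reduce to $(k_1-1)+(k_2-2)+(k_1-2)(k_2-2)$ and the $2E_k$ and $E_j+E_k$ analyses of Section~\ref{S1} exhibit symbol sequences meeting it; for $r=k_\text{min}$ one checks that all generators are ``packed'' at the lowest possible orders $k_i=i$ forced up, the first sum collapses appropriately, and the product term handles the tail out to $k_\text{max}=k_r$, with the $3E_3$, $4E_4$, $nE_n$ cases realizing equality.

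The main obstacle I expect is the sharpness/equality claim rather than the inequality: proving $\varkappa$ is \emph{at most} that expression is a routine (if fiddly) counting of dimensions of symmetric powers against the number and orders of generators, using the stabilization $\dim g_i\to1$ and the Euler-characteristic relation $h^2=h^1-h^0=r-1$ quoted in Section~\ref{S1} to pin down how many compatibility conditions constrain the descent. But showing equality holds \emph{precisely} in the two boundary regimes requires exhibiting, or at least arguing the existence of, symbol sequences that saturate every step — i.e. that no hidden compatibility forces an extra drop. For this I would lean on the explicit normal forms already computed: the $2E_2$, $E_2+E_3$, $2E_3$, $3E_3$ cases in \S\ref{S11}--\S\ref{S14} give the $r=2$ and $r=k_\text{min}$ witnesses for small orders, and a prolongation argument (each Laplace transformation step, as described in \S\ref{S21}, changes the type in a controlled way and the complexity by exactly $1$) propagates the equality to all orders. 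A secondary subtlety is the off-diagonal bookkeeping — making sure the ``index shift'' $k_i-i$ is the right normalization and that the cross term is $(k_1-r)(k_r-r)$ and not some other bilinear combination — which I would pin down by checking the formula against the full zoo table for $k_\text{max}\le5$.
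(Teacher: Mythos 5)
Your overall plan --- bound $\varkappa$ as the area between the graph of $\dim g_i$ and the constant $1$, realized by an extremal dimension profile --- is the same as the paper's, but the key constraint you impose is wrong, and it is exactly this wrong step that conceals the cross term. You require that ``after the last generator (order $k_r$) the dimension must have reached $1$''. It need not, and in general cannot: $r$ equations of top order $k_r$ cannot cut $S^{k_r}T^*$, of dimension $k_r+1$, down to a line unless $r\ge k_r$; already for $2E_3$ one has $\dim g_3=4-2=2$, and $\varkappa=4$, whereas a profile forced to equal $1$ from order $k_r$ onward yields only $0+1+2=3$. In the paper's extremal profile the symbol dimension keeps falling by one per step, by pure prolongation, for $k_1-r$ further orders beyond $k_r$, reaching $1$ only at order $k_r+(k_1-r)$; this tail, an extra triangle of area $\tbinom{k_1-r+1}{2}$, is precisely what turns the ``staircase plus plateaus'' count into $\sum_i(k_i-i)+(k_1-r)(k_r-r)$ (the paper gets it either by writing out that profile, or by formally adjoining equations of orders $k_r+1,\dots,k_r+(k_1-r)$ so as to reduce to the exactly computable case $r=k_1$). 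With your constraint the count comes out strictly smaller than the stated bound whenever $k_1>r$, so your bookkeeping cannot ``give exactly'' the formula --- consistent with your own remark that the cross term would be ``pinned down by checking against the zoo table'', which is verification rather than derivation. The substantive point that must be argued, and is absent from your sketch, is why the maximizing profile is the one in which the symbolic system reduced by the characteristic $X$ becomes of finite type at the latest possible order.

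The equality cases also require a different argument from the one you propose. For $r=k_{\min}$ the paper simply evaluates the (in that case forced) profile, getting $\varkappa=\sum k_j-r(r+1)/2$. For $r=2$ equality holds for \emph{every} compatible $E_{k_1}+E_{k_2}$ of class $\oo=1$, because the symbols divided by the common characteristic factor form a complete intersection, and the solution-space dimension count of \cite{KL$_2$} gives $\varkappa=(k_1-1)(k_2-1)$, which coincides with the right-hand side; exhibiting low-order witnesses from \S\ref{S11}--\S\ref{S14} does not establish this for all orders. Finally, propagating equality by asserting that each Laplace transformation changes $\varkappa$ by exactly $1$ is circular at this point of the paper: that statement is Theorem \ref{SHA}, proved later, and even there the drop equals $1$ only generically.
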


 \begin{proof}
We start with the case $r=k_1$. Then letting $k_{r+1}=\infty$ we get:
 $$
\dim g_i-1=\left\{\begin{array}{ll}
i, & i<k_1 \\ (k_1-j), & k_j\le i< k_{j+1}
\end{array} \right.
 $$
and so the complexity equals
 $$
\varkappa=\frac{k_1(k_1-1)}2+\sum_{j=1}^{r-1}(k_{j+1}-k_j)(k_1-j)=\sum_{1}^{r}k_j-\frac{r(r+1)}2,
 $$
as required.

For $r<k_1$, $\varkappa$ achieves the maximum value if the system $\sum_{i=1}^s E_{k_i}$
has one characteristic $X$ only on the last step $s=r$, and has more before this;
otherwise $\dim g_i$ decreases more rapidly. In other words reducing the symbolic system by $X$,
the system becomes of finite type only at the highest possible order.

Thus the maximal growth of $\dim g_i$ is achieved for
 $$
\dim g_i-1=\left\{\begin{array}{ll}
i, & i<k_1 \\ (k_1-j), & k_j\le i< k_{j+1} \ (j<r) \\ (k_1-r-j), & i=k_r+j \ (0\le j\le k_1-r)
\end{array} \right.
 $$
and then the direct check yields the desired value of $\varkappa$.

Alternatively dimension of symbols does not change if we assume that the system $\E$ at order $k_\text{max}=k_r$
has more than 1 characteristic $X$, but we add equations of orders $k'_{r+t}=k_r+t$, $1\le t\le k_1-r$,
so that only at the last step a unique characteristic is left.
Thus we can use calculation with $r=k_1$ since $(k'_i-i)=k_r-r$ for $r<i\le k_1$.

Finally if $r=2$, then the reduced by $X$ symbolic system is of complete intersection type and
the equality follows from calculation of the dimension of the solutions space as in \cite{KL$_2$}.
 \end{proof}

Thus complexity $\varkappa$ is bounded via the orders of the system.
On the other hand the amount of types grows super-polynomially with $\varkappa$.

\subsection{How many system types have the same complexity $\varkappa$?}\label{23}

Let us list all systems of low complexity $\varkappa=\sum(\dim g_i-1)\le 6$.

 \begin{itemize}
 \item[$\varkappa=0$.] This is possible only for $E_1$.
 \item[$\varkappa=1$.] Obviously the type $2E_2$.
 \item[$\varkappa=2$.] Only one decomposition (no intermediate zeros are possible)
    $2=1+1$. $E_2+E_3$.
 \item[$\varkappa=3$.] Two decompositions $3=1+1+1=1+2$, we get resp. the types $E_2+E_4$ and $3E_3$.
 \item[$\varkappa=4$.] Two decompositions $4=1+1+1+1=1+2+1$, but the latter splits:
    $E_2+E_5$, $2E_3$ and $2E_3+E_4$ (two cases distinguish by no common
    characteristic and one common characteristic for $2E_3$).
 \item[$\varkappa=5$.] Three decompositions $5=1+1+1+1+1=1+2+1+1=1+2+2$.
    Corresponding types are: $E_2+E_6$, $2E_3+E_5$, $E_3+2E_4$.
 \item[$\varkappa=6$.] Here we have four decompositions
    $6=1+1+1+1+1+1=1+2+1+1+1=1+2+2+1=1+2+3$, the third splits:
    $E_2+E_7$, $2E_3+E_6$, $E_3+E_4$, $E_3+E_4+E_5$, $4E_4$.
 \end{itemize}

Denote by $R(n)$ the number of $\oo=1$ different types $\sum m_iE_i$ having complexity $\varkappa=n$.
We have the following values for this function:

\smallskip

 {
\hskip+25pt
 \begin{tabular}{c|c|c|c|c|c|c|c|c|c|c|c|c}
$n$ & {\it1} & {\it2} & {\it3} & {\it4} & {\it5} & {\it6} & {\it7} & {\it8} & {\it9} & {\it10} & \dots &
 {\it asymptotics}\\
\hline
\vphantom{$\frac{A^A}{Q}$} $R(n)$ &
1 & 1 & 2 & 3 & 3 & 5 & 6 & 9 & 11 & 13 & \dots & $\exp(\pi\sqrt{\l\,n})$
 \end{tabular}
 }

\smallskip

One can prove (via a relation with the number-theoretic partition functions) that the quantity $\l$ defined by
$\sqrt{\l}=\frac{1}{\pi}\cdot\overline\lim\,\log R(n)/\sqrt{n}$ satisfies $\frac13\le\l<\frac23$.

\subsection{Classification of transformations: Phenomenology}\label{S24}

Let us summarize our results of \S\ref{S11}-\ref{S14}. Given a
compatible linear system $\E$ of class $\oo=1$ we have constructed via generalized
Laplace transformation $L:u\mapsto Xu$ a new compatible system $\ti\E$.
We have observed that only three different situations were possible:

 \begin{enumerate}
 \item The transformed system $\ti\E$ is of class $\oo=1$ and the
inverse $L^{-1}$ is a differential operator.
 \item The transformed system $\ti\E$ is of class $\oo=1$, but the
inverse operator $L^{-1}$ is obtained by solving a finite type
system.
 \item The transformed system $\ti\E$ is of class $\oo=0$, but the
inverse operator $L^{-1}$ is integral.
 \end{enumerate}

Note that situation (1) is generic. Finding the inverse in situation
(2) is equivalent to solving a system of ODEs, while in situation
(3) it is given by a parametrized ODE, so that the general solution
depends on one unknown function.

We claim that this is the general pattern, namely we have

 \begin{theorem}\label{123}
For compatible linear systems of class $\oo=1$ the generalized Laplace
transformations can branch into three situations (1), (2) and (3)
above. The first occasion is generic. \qed
 \end{theorem}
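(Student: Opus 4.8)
The plan is to set up the general statement so that it reduces to exactly the kind of case-by-case bookkeeping already carried out in \S\ref{S11}--\ref{S14}, but organized by the symbolic data alone. Start with a compatible linear system $\E$ of class $\oo=1$ with its characteristic straightened to $X=\p_x$, and apply the substitution $L\colon u\mapsto v=Xu$. On the level of symbols, $L$ corresponds to dividing the symbolic ideal by $X$; since $\lim_{k\to\infty}\dim g_k=1$ and the base is 2-dimensional, the reduced symbolic module is supported on the single point $[X]\in\C P^1$, and the key structural fact is the dichotomy: either the quotient module still has $\dim\tilde g_k=1$ for large $k$ (so $\ti\E$ is again of class $\oo=1$), or the quotient is finite-dimensional ($\ti\E$ is of finite type, class $\oo=0$). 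This is the split between case (3) and cases (1)--(2).

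Next I would analyze the inverse. The operator $L$ is injective on solutions (if $Xu=0$ and $\E$ holds then $u$ solves a class $\oo=0$ subsystem, forcing the one remaining functional parameter to be recovered by the inversion, never lost), so $L$ is a bijection $\op{Sol}(\E)\to\op{Sol}(\ti\E)$ and the question is only the nature of $L^{-1}$. Because $\E$ is compatible and of class $\oo=1$, among its generators and their prolongations there is at least one equation in which $u$ itself (order-zero term) appears with nonvanishing coefficient after we solve that equation for $u$ in terms of $v=Xu$ and $Y$-derivatives of $v$; this is exactly the relative invariant (the analog of the Laplace invariant $e_1$, $f_1$, etc.) whose (non)vanishing was tracked in each $\Upsilon$-branch. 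If some such order-zero coefficient is nonzero, $L^{-1}$ is a pure differential operator --- case (1). If all order-zero coefficients vanish but the relevant first-order-in-$Y$ operator is nonzero, then $u$ is recovered by solving a Frobenius (finite type) system --- case (2). If even that degenerates, one is left with $u=X^{-1}v$, a parametrized ODE along the characteristic --- case (3). The three mutually exclusive alternatives, indexed by the first nonvanishing term in this hierarchy of conditional invariants, exhaust all possibilities precisely because $h^0=1$ forces a single order-zero relation and compatibility rigidifies the higher relations (as in $f_1=b_1e_3$, $f_2=b_2e_3+[X,e_3]$, etc.).

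For genericity of (1), I would argue that the vanishing of the controlling order-zero invariant cuts out a proper closed (in fact, lower-dimensional in the natural jet-stratification of coefficient space) subset: within any fixed type $\sum m_iE_i$ the invariant is a nonzero differential polynomial in the coefficients, so its zero locus is nowhere dense. Hence in each type the branch landing in situation (1) is the open dense one, and the claim "the first occasion is generic" follows. One should also check that situation (1) does genuinely occur for every type that admits a nontrivial Laplace transformation, which is visible from the $\Upsilon^{\mathrm a}$ and $\Upsilon^1$ branches in \S\ref{S11}--\ref{S14} and extends to arbitrary order by the same symbol computation.

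The main obstacle will be the second paragraph: making precise, in the general (arbitrary order, arbitrary $r$) setting, that the order-zero relation used to invert $L$ is genuinely available and that compatibility constrains the next-order terms exactly as in the low-complexity examples --- i.e. showing the hierarchy of conditional invariants is well-defined and exhaustive rather than just illustrated. This is really a statement about the Spencer $\delta$-cohomology of the reduced system ($h^0(\ti\E)=1$, $h^1(\ti\E)\le r$, and the compatibility operators of $\ti\E$ expressed through those of $\E$), and I expect the cleanest route is to phrase the whole argument cohomologically: track $H^{*,0}$, $H^{*,1}$, $H^{*,2}$ under the division-by-$X$ functor and read off which of (1), (2), (3) occurs from whether the order-zero piece of $\E$ survives, partially survives as a finite-type block, or is annihilated.
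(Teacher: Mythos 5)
The central gap is the genericity claim. Your argument that situation (1) is generic rests on asserting that, within each type, the controlling order-zero invariant is a nonzero differential polynomial in the coefficients, so its zero locus is nowhere dense. But the coefficients of a compatible class $\oo=1$ system are not free: compatibility ties them (e.g. $f_1=b_1e_3$, $f_2=b_2e_3+[X,e_3]$ for $3E_3$), so you must show the invariant is not identically zero on the subvariety of \emph{compatible} systems of an \emph{arbitrary} type and order --- and you defer exactly this to ``the same symbol computation,'' which in the text exists only for orders $\le 3$. The paper closes this gap by a uniform construction: prolong $\E$ to the jet level where it becomes involutive, so that any class $\oo=1$ system takes the type $kE_k$ with symbols $X^k,YX^{k-1},\dots,Y^{k-1}X$; compatibility and the basic gauge clear the top powers of $Y$, and the remaining terms form a linear system in the unknowns $Y^{k-2}u,\dots,Yu,u$ whose coefficient matrix generically has maximal rank, yielding a differential inverse of order $\le k-1$ independently of the original type. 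Without such a uniform argument (or an explicit generic example in every type), your ``nonzero differential polynomial'' step is unsupported, and with it the statement that (1) is the open dense branch.

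Two further problems. First, the claim that $L$ is a bijection $\op{Sol}(\E)\to\op{Sol}(\ti\E)$ is false outside situation (1): in situations (2) and (3) the inversion adds constants and possibly a derivative of the arbitrary function (cf.\ the remark in $\Upsilon_{23}^{2{\rm a}}$), and $X$ can annihilate solutions (in Example 2 of \S\ref{S15} the term $Cy$ lies in the kernel of $u\mapsto u_x$). What is actually used in the paper is weaker and correct: the inverse operator is unique modulo $\mathfrak{I}(\ti\E)$, and the complexity $\varkappa$ strictly drops (Theorem \ref{SHA}, proved via the Cauchy-data representation of the general solution), which is also what permits iterating the transformations while the class stays $\oo=1$. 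Second, you yourself identify the exhaustiveness of your hierarchy (order-zero term invertible / finite-type block / purely integral inverse) as the main obstacle, and the proposed cohomological remedy --- tracking $H^{*,0},H^{*,1},H^{*,2}$ under division by $X$ --- is left entirely unexecuted; so in the general case the trichotomy is asserted rather than proved. In sum, the proposal reproduces the phenomenological classification of \S\ref{S11}--\ref{S14} but does not supply the two ingredients the paper's proof actually rests on: the prolongation-to-involutivity rank argument for generic differential invertibility, and the complexity-decrease argument that governs the branching and its iteration.
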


This will be demonstrated in the next section by showing that Laplace transformations
decrease the complexity.

We take by definition all systems of class $\oo=0$ to be of lower complexity
than the systems of class $\oo=1$. Thus situation (3), when we
leave the table $\oo=1$ satisfies the claim.

Iteration of Laplace transformations leads either to a system of class $\oo=0$ or to one
equation $E_1$ of class $\oo=1$. Both are reduced to the solution of
ODEs in accordance with Sophus Lie's theorem \cite{L$_1$}.

But for the linear $\oo=1$ case and generic $\E$ we obtained an algorithm for finding
the solutions involving differentiations and quadratures only
(for non-generic $\E$ solutions of ODEs can occurs).

\section{Laplace transformations for linear systems}\label{S3}

Now after lots of examples we give a rigorous definition and proof.

\subsection{Generalized Laplace transformations}\label{S31}

Consider a system $\E=\{F[u]=0\}$ of class $\oo=1$ on the unknown $u=u(x,y)$ generated by a vector
linear differential operator $F=(F_1,\dots,F_r)$. We can choose generators of $\E$
in such a way that the maximal degree of $Y$ in $\op{symb}(F_k)$ strictly increases with $k$
(this is independent of basic/auxiliary gauges).

Then there exists precisely one change $X\mapsto X+a$, $a\in C^\infty(M)$
such that in the decomposition $F_k=\sum\a^k_{j\,i}Y^jX^i$
the number $\max\{j:\exists k\,\a^k_{j\,0}\ne0\}$ is minimal (maximal $j$ corresponds to the minimal $k$).
This fixes the basic gauge, but leaves an auxiliary gauge freedom.

 \begin{dfn}
With $X$ fixed as above the generalized Laplace transformation is the substitution
 $$
u\mapsto v=Xu.
 $$
 \end{dfn}

Let $\mathfrak{I}(\E)=[[F_1,\dots,F_r]]$ be the differential ideal of $\E$.

In order to define the transformed PDE system $\ti\E$ let us consider an
ideal\footnote{$\mathcal{J}$ is surely a left differential ideal, but if we restrict $Y$ by $[X,Y]=0$,
it will be also a right differential ideal.}
$\mathcal{J}=\{\sum_{i>0}\beta_{ij}Y^jX^i\in\mathfrak{I}(\E)\}$,
which factorizes $\mathcal{J}=\mathcal{J}'\cdot X$ for $\mathcal{J}'=\{\sum_{i>0}\beta_{ij}Y^jX^{i-1}\}$.
Let $G=(G_1,\dots,G_l)$ be a vector generator of $\mathcal{J}'$. Then we define $\ti\E=\{G[v]=0\}$.

\smallskip

\emph{Inverse Laplace transformation\/} is defined as a scalar (but apriori not necessarily differential) 
operator $L$ such that $L\cdot X=\1\mod\mathfrak{I}(\E)$.

 \begin{prop}
Inverse operator is unique modulo $\mathfrak{I}(\tilde\E)$.
 \end{prop}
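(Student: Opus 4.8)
The plan is to show that any two inverse Laplace operators differ by an element of $\mathfrak{I}(\ti\E)$, read as an operator acting on $v$. First I would set up the situation precisely: suppose $L_1$ and $L_2$ both satisfy $L_i\cdot X=\1\mod\mathfrak{I}(\E)$, i.e. $L_i X = \1 + R_i$ with $R_i\in\mathfrak{I}(\E)$. Subtracting, $(L_1-L_2)X=R_1-R_2\in\mathfrak{I}(\E)$. So the operator $D:=(L_1-L_2)X$ lies in $\mathfrak{I}(\E)$, and moreover it is visibly of the form $\sum_{i>0}\beta_{ij}Y^jX^i$ (every term ends in $X$), hence $D\in\mathcal{J}$. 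By the factorization $\mathcal{J}=\mathcal{J}'\cdot X$ established just above the statement, $D=D'\cdot X$ with $D'\in\mathcal{J}'$, and since $X$ is a nonzero-symbol (injective on the relevant function spaces, being a first-order operator whose symbol is the characteristic) we may cancel $X$ on the right to conclude $L_1-L_2=D'\in\mathcal{J}'$.

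The remaining step is to identify $\mathcal{J}'$, as a module of operators acting on the new unknown $v$, with the differential ideal $\mathfrak{I}(\ti\E)$. But this is essentially the definition of $\ti\E$: the vector generator $G=(G_1,\dots,G_l)$ of $\mathcal{J}'$ is precisely the generator of $\ti\E=\{G[v]=0\}$, so the left differential ideal generated by the $G_k$ is $\mathfrak{I}(\ti\E)$, and $\mathcal{J}'$ coincides with it (here one uses that $\mathcal{J}$, and hence $\mathcal{J}'$, is a left differential ideal, as noted in the footnote; when $[X,Y]=0$ it is two-sided, which is the convenient normalization). Therefore $L_1-L_2\in\mathfrak{I}(\ti\E)$, which is the claim: the inverse operator is unique modulo $\mathfrak{I}(\ti\E)$.

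One should also check the cancellation step is legitimate, namely that $D'\cdot X=0\mod\mathfrak{I}(\E)$ forces $D'=0\mod\mathfrak{I}(\ti\E)$ rather than merely that right-multiplication by $X$ is injective on honest operators: if $D'X\in\mathfrak{I}(\E)$ then by construction $D'X\in\mathcal{J}$, so $D'X=E'X$ for some $E'$ with $E'$ built from the generators $G_k$, and $(D'-E')X=0$ as an operator identity, giving $D'=E'\in\mathfrak{I}(\ti\E)$. Thus the genuinely delicate point is bookkeeping: making sure that "$\mod\mathfrak{I}(\E)$" on the left of $X$ translates exactly to "$\mod\mathfrak{I}(\ti\E)$" on $v$, with no loss coming from the right ideal versus left ideal distinction. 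This is where the assumption $[X,Y]=0$ (or $\op{ord}[X,Y]=0$, handled by absorbing the correction into the gauge) does the work, and I expect that to be the main obstacle to a fully clean write-up; everything else is the short algebraic manipulation above.
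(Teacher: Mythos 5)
Your argument is correct and is essentially the paper's own proof: subtract to get $(L_1-L_2)X\in\mathfrak{I}(\E)$, observe that this places it in $\mathcal{J}=\mathcal{J}'\cdot X$, and conclude $L_1-L_2\in\mathcal{J}'=\mathfrak{I}(\ti\E)$. You merely spell out the cancellation of $X$ and the identification $\mathcal{J}'\simeq\mathfrak{I}(\ti\E)$, which the paper treats as immediate from the definition of $\ti\E$.
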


 \begin{proof}
Let $L_1$ and $L_2$ be two inversions for the operator $X$, i.e.
 $$
\1-L_1X,\ \1-L_2X\in \mathfrak{I}(\E)\quad\Longrightarrow\quad (L_1-L_2)X\in\mathfrak{I}(\E).
 $$
This means that $(L_1-L_2)\in\mathcal{J}'$ or equivalently $(L_1-L_2)\in\mathfrak{I}(\tilde\E)$.
 \end{proof}

Next let us consider the existence part.

 \begin{theorem}
For a generic system $\E$ the generalized Laplace transformation $v=Xu$
has a differential inverse operator $u=L[v]$.
 \end{theorem}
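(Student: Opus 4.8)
The plan is to reduce the statement to a statement about the symbolic (principal symbol) system and then lift it by the formal integrability hypothesis. Recall that $\E$ has class $\oo=1$, so starting from some order $N$ the symbol $g_k$ is one-dimensional, spanned by $X^k$ (after straightening the characteristic to $X=\p_x$ and fixing the basic gauge as in the definition of the Laplace transformation). The key observation is that the ideal $\mathfrak{I}(\E)$, when restricted to sufficiently high order, contains every monomial $Y^jX^i$ with $i>0$: indeed, the symbol of $\E$ at order $k$ has codimension $k$ in $S^kT^*M$, and the complementary directions are exactly $Y^k, Y^{k-1}X, \dots, YX^{k-1}$, i.e. all monomials containing at least one $X$ with total degree $k$. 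Hence for $k\ge$ some $k_0$, $Y^{k-1}X\in\op{symb}_k(\mathfrak{I}(\E))$, which means there is a differential operator in $\mathfrak{I}(\E)$ whose symbol is $Y^{k-1}X$; call it $P$.

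First I would make precise the reduction $\mathcal{J}=\mathcal{J}'\cdot X$ used just above to define $\ti\E$: the inverse operator $L$ we seek is precisely an element of $\mathcal{J}'$ of the form $\1 + (\text{terms of order}\ge1)$, equivalently $LX=\1\mod\mathfrak{I}(\E)$. So existence of a differential inverse is equivalent to: the constant function $\1$ lies in $\mathcal{J}'$ modulo $\mathfrak{I}(\tilde\E)$, i.e. $\1\cdot X$ — which is just $X$ — together with the higher-order operators of $\mathcal{J}$ spans, over $C^\infty(M)$ up to $\mathfrak{I}(\E)$, an operator equal to $\1$. Now I use the operator $P$ above with symbol $Y^{k-1}X$. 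Dividing off $X$ (possible since $[X,Y]$ has order $0$ so $Y^{k-1}X = XY^{k-1} + \text{lower}$, and more importantly $P\in\mathcal{J}$ so $P = P'X$ with $P'\in\mathcal{J}'$ of symbol $Y^{k-1}$), we get an operator $P'$ of order $k-1$ in $\mathcal{J}'$ whose leading term in $Y$ is $Y^{k-1}$. Iterating — taking the operator in $\mathfrak{I}(\E)$ with symbol $Y^{k-2}X$, etc. — I build a triangular (in the $Y$-degree) family $P'_{k-1}, P'_{k-2},\dots,P'_0$ in $\mathcal{J}'$, and the genericity hypothesis is exactly that the order-zero member $P'_0$ of this family, or an appropriate $C^\infty(M)$-linear combination reaching order $0$, is a \emph{nonvanishing} function $\theta$. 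Then $L=\theta^{-1}P'_0$ (suitably combined) is the desired differential inverse, since $L X=\theta^{-1}\theta=\1\mod\mathfrak{I}(\E)$. The non-generic locus is where $\theta\equiv0$, which is where the branching into situations (2) and (3) of Theorem~\ref{123} occurs.

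The main obstacle, and the place where the genericity must be invoked carefully, is controlling the \emph{lowest} order at which one can find an operator in $\mathfrak{I}(\E)$ whose symbol contains $X$ to the first power only and $Y$ to a given power. The symbolic argument guarantees such operators exist at high order, but to get a differential (rather than merely formal or integral) inverse one needs to descend all the way to order $0$, and each descent step uses a compatibility condition of $\E$; the descent terminates successfully precisely when a certain iterated compatibility expression — a relative differential invariant of the pseudogroup $\mathcal{G}$, exactly the type of $\tilde c_2, \tilde e_1, \theta$ appearing in \S\ref{S11}--\S\ref{S14} — is nonzero. So the proof structure is: (i) straighten $X$ and fix the basic gauge per the definition; (ii) use $\dim g_k=1$ plus formal integrability to produce, for each $j$, an operator in $\mathfrak{I}(\E)$ with symbol $Y^jX$ at the minimal order, which after removing the factor $X$ gives a generator of $\mathcal{J}'$ with $Y$-leading term $Y^j$; (iii) run a finite triangular elimination among these generators to reduce the $Y$-degree to zero, tracking the resulting function $\theta\in C^\infty(M)$; (iv) observe $\theta$ is the obstruction, it is a nonzero relative invariant generically (this is the genericity assumption — it fails on a proper subvariety of jet space), and on its zero set one falls into cases (2)/(3); (v) set $L=\theta^{-1}\cdot(\text{the order-}0\text{ eliminant})$ and verify $LX\equiv\1\ (\mathrm{mod}\ \mathfrak{I}(\E))$ and uniqueness mod $\mathfrak{I}(\tilde\E)$ by the preceding proposition. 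The bookkeeping in step (iii) — that the elimination really does terminate and that the commutators $[X,Y]$ of order $0$ do not reintroduce higher $Y$-degree — is the routine-but-delicate part I would check, but it is exactly parallel to the explicit computations already carried out for $2E_2$, $E_2+E_3$, $3E_3$, and $2E_3$.
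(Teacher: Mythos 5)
Your overall strategy---prolong to the involutive level where $\dim g_k=1$, exploit compatibility and the basic gauge, then eliminate the leftover pure-$Y$ terms, with genericity meaning nonvanishing of the resulting coefficient---is in spirit the same as the paper's proof, but two steps of your formalization are genuinely wrong, and one of them hides the entire difficulty. First, your reformulation of what an inverse is: by the definitions in Section \ref{S31}, $L\in\mathcal{J}'$ means $L\cdot X\in\mathfrak{I}(\E)$, i.e.\ $LX\equiv 0$, not $LX\equiv\1$, modulo $\mathfrak{I}(\E)$; moreover $\mathcal{J}'=\mathfrak{I}(\ti\E)$, so the relation $\1\in\mathcal{J}'$ (even modulo $\mathfrak{I}(\ti\E)$) would say that $v=0$ is a consequence of the transformed system, which is absurd. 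For the same reason your closing verification $LX=\theta^{-1}\theta=\1$ does not type-check: if $P'_0=\theta$ were a nonvanishing function lying in $\mathcal{J}'$, then $\theta\cdot X\in\mathfrak{I}(\E)$ would force $Xu=0$ on all solutions. What you actually need is an element of $\mathfrak{I}(\E)$ of the form $\theta\cdot\1-Q\cdot X$ with $\theta\ne0$, i.e.\ one whose non-$X$-divisible part has been reduced to order zero; then $u=\theta^{-1}Q[v]$ on solutions, and $L=\theta^{-1}Q$ satisfies $\1-LX\in\mathfrak{I}(\E)$.

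Second, and more seriously, your production of the operators $P'_j$ is circular. From $Y^{k-1}X\in\op{symb}(\mathfrak{I}(\E))$ you only get an operator $P\in\mathfrak{I}(\E)$ whose top-order symbol is $Y^{k-1}X$; nothing forces its lower-order part to be $X$-divisible, so the assertion $P\in\mathcal{J}$ (hence $P=P'X$ with $P'\in\mathcal{J}'$) is exactly what cannot be assumed---the leftover terms in $Y^ju$ and $u$ are the whole problem, and eliminating them is the content of the theorem. The paper's proof confronts them directly: after prolongation the system is $kE_k$ with symbols $X^k,YX^{k-1},\dots,Y^{k-1}X$; compatibility together with the basic gauge removes the $Y^{k-1}$ terms everywhere and the $Y^{k-2}$ terms from the first $k-1$ equations, so after the substitution $v=Xu$ the $k$ equations form a linear algebraic system over $C^\infty(M)$ in the $k-1$ unknowns $Y^{k-2}u,\dots,Yu,u$; genericity is precisely maximal rank of this system, and solving it for $u$ yields the differential inverse $u=L[v]$ with $\op{ord}L\le k-1$. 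Your step (iii) gestures at this elimination, and your identification of the obstruction with relative invariants is the right intuition, but as written the triangular family $P'_j\in\mathcal{J}'$ need not exist, and $\theta$ must arise as the eliminant of the linear system above, not as an order-zero element of $\mathcal{J}'$.
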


 \begin{proof}
Let us prolong $\E$ to the place, where it becomes involutive. This
is the jet-level $k$ such that $\dim g_k=1$. Discard all equations
of order lower than $k$. Then we have $r=k$ compatible PDEs of order
$k$ (type $kE_k$), so that $\varkappa=\frac{(k-1)k}2$.

The symbols of these equations are $X^k,YX^{k-1},Y^2X^{k-2},\dots,Y^{k-1}X$.
Compatibility forces the first $(k-1)$ equations to be free of $Y^{k-1}$ terms.
When the basic gauge is applied, the last PDE has the same property and then
the compatibility implies that the first $(k-1)$ equations contain no $Y^{k-2}$ terms.

This is symbolically shown in the following Young diagram (for $4E_4$),
where we omit all terms in equations $F_j$ of $\E$ except $Y^i$:
 $$
  \begin{picture}(120,100)
 \put(20,20){\line(0,1){20}}
 \put(40,20){\line(0,1){80}}
 \put(60,20){\line(0,1){80}}
 \put(80,20){\line(0,1){80}}
 \put(20,20){\line(1,0){60}}
 \put(20,40){\line(1,0){60}}
 \put(40,60){\line(1,0){40}}
 \put(40,80){\line(1,0){40}}
 \put(40,100){\line(1,0){40}}
 \put(24,13){$\underbrace{\hphantom{AAAAAA}}_{k-1}$}
 \put(77,57){$\left.\begin{array}{c}\\
 \\ \\ \\ \\ \\ \end{array}\right\}
 {\text{\tiny $k$}}$}
  \end{picture}
 $$
It corresponds to the linear system of $k$ equations and $(k-1)$ unknowns
$Y^{k-2}u,\dots,Yu,u$. Generically coefficients are such that the
rank is maximal. Then we can exclude $Y^{k-2}u,\dots,Yu$ and get an
expression $u=L[v]$, where $L$ is a differential operator of
$\op{ord}\le k-1$.
 \end{proof}

More refined Young diagrams based on the type of symbol of $\E$ can be drawn and then
an inverse $L$ with the minimal possible order (order of the inverse) can be chosen.

For non-generic systems the Laplace transformation can have inverse,
which is either a compatible Frobenius system or a parametrized ODE.
Define order of the inverse in these cases to be $0$ or $-1$ respectively.

\subsection{Proof of the main result}\label{S32}

Theorem \ref{Thm} follows from

 \begin{theorem}\label{SHA}
Under generalized Laplace transformation the complexity strictly decreases.
Generically it decreases only by 1: $\varkappa\mapsto \varkappa-1$.
 \end{theorem}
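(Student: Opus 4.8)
The plan is to track the symbols $g_i$ directly through the transformation $u\mapsto v=Xu$. Recall that $\varkappa=\sum_{i\ge0}(\dim g_i-1)$, and after prolongation to the involutive level $k$ the system has type $kE_k$ with symbols $X^k,YX^{k-1},\dots,Y^{k-1}X$, so $\dim g_i-1=i$ for $i<k$ and $\dim g_i-1=k-1$ for $i\ge k$. This is the ``saturated'' situation in which $\varkappa$ is computed. The key observation is that the generalized Laplace substitution is, at the level of symbols, division by the characteristic $X$: the symbolic module of $\ti\E$ is obtained from that of $\E$ by stripping one factor of $X$ off every generator that carries a positive power of $X$ (this is exactly the factorization $\mathcal{J}=\mathcal{J}'\cdot X$ in the definition of $\ti\E$). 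So I would first make precise that $\dim\ti g_{i}=\dim g_{i+1}$ for all $i$ in the stable range, and more generally that the symbol of $\ti\E$ at order $i$ is the image of $g_{i+1}\cap(X\cdot S^iT M)$ under division by $X$.

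Next I would assemble the complexity bookkeeping. Write $\ti k$ for the involutive order of $\ti\E$; from the type-reduction analysis of \S\ref{S11}--\ref{S14} (or the Young-diagram argument in the existence theorem) the transformed system is again compatible of class $\oo=1$ (or drops to $\oo=0$, the situation (3) case, which by our convention is declared of lower complexity and we are done). In the $\oo=1$ case the stable symbol dimension is still eventually $1$, so $\ti\varkappa=\sum_{i\ge0}(\dim\ti g_i-1)$ is finite, and by the symbol-shift identity $\dim\ti g_i-1\le\dim g_{i+1}-1$ for every $i$ once we are past the low-order transient. Summing over $i$ gives $\ti\varkappa\le\varkappa-(\dim g_0-1)-(\text{correction from the bottom})$. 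Since $\dim g_0-1=h^0-1=0$ this alone is not enough; the genuine decrease comes from the top/bottom interplay: passing from $g_{i+1}$ to $\ti g_i$ loses exactly the contribution of one ``row'' of the Young diagram, because the basic-gauge normalization forces the leading equation to lose its top $Y$-power and the substitution removes one unit of $X$-degree uniformly. Concretely, in the saturated $kE_k$ model one computes $\ti\varkappa=\frac{(k-2)(k-1)}2+(\text{lower-order terms})$, and $\frac{(k-1)k}{2}-\frac{(k-2)(k-1)}{2}=k-1\ge1$; in the generic branch the reduction collapses to exactly $\varkappa\mapsto\varkappa-1$ because only one diagram cell is actually lost while the rest of the profile is merely re-indexed.

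For the generic ``decreases by exactly $1$'' claim, I would argue as follows. Generically the Laplace transformation lands in situation (1): the inverse $L$ is differential of order $k-1$, and $u=L[v]\bmod\mathfrak I(\ti\E)$ expresses $u$ through $v$ without enlarging the solution space. The pair of maps $v=Xu$ and $u=L[v]$ then gives, on the level of the stable symbols, mutually inverse isomorphisms between $g_i$ for $i$ large and $\ti g_{i-1}$, so the tail of the sequence $(\dim g_i-1)_i$ is just shifted by one index and $\varkappa-\ti\varkappa$ equals the single missing term $\dim g_0-1+\cdots$ — more carefully, it equals the number of indices $i$ at which $\dim g_i$ strictly exceeds $\dim\ti g_{i-1}$, which in the generic (general-position symbol) case is exactly one. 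When the symbol is special (the degenerate branches $\Upsilon^2_{23}$, $\Upsilon^2_{33}$, etc.) more than one cell is lost and the drop is strictly larger than $1$; this is consistent with, and in fact explains, the branching in Theorem \ref{123}.

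The main obstacle I expect is making the ``one cell is lost'' count clean and uniform across all the type branches rather than case-by-case: one has to show that the compatibility-forced vanishings (the $b_i=c_i=\dots=0$ normalizations appearing throughout \S\ref{S11}--\ref{S14}) are precisely accounted for by the Spencer $\delta$-cohomology identity $h^2=h^1-h^0$, so that the Young diagram of $\ti\E$ is obtained from that of $\E$ by deleting one column and re-reading the rows. I would handle this by working with the bigraded symbol module $\bigoplus_i g_i^{\ast}$ as a module over the polynomial ring in $X,Y$, identifying the Laplace transformation with the exact functor ``saturate and divide by $X$'', and invoking the Euler-characteristic vanishing (footnoted in the excerpt) to pin down the Hilbert function of the quotient; the strict decrease of $\varkappa$ is then the statement that this quotient has strictly smaller ``excess Hilbert sum'', with equality to $1$ exactly when the $X$-torsion being removed is as small as possible, i.e. in general position.
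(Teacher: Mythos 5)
Your route --- tracking the symbol dimensions $\dim g_i$ through the substitution and treating the Laplace transformation as ``divide the symbolic module by $X$'' --- is genuinely different from the paper's proof of Theorem \ref{SHA}, which never touches the Hilbert-function bookkeeping: it normalizes $X=\p_x$, invokes the Cartan--K\"ahler representation of the general solution $u=\a_q f^{(q)}+\dots+\a_1f'+\a_0f+c_1\phi_1+\dots+c_{\varkappa-q}\phi_{\varkappa-q}$, observes that the basic gauge forces $\a_q$ to depend on $y$ only, and then reads off directly that $v=u_x$ loses the top term, so the complexity drops, and drops by exactly $1$ when $(\a_{q-1})_x\ne0$ and $(\phi_i)_x\ne0$ (the generic case).

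As written, your argument has a genuine gap at its central step. The symbols of $\ti\E$ are \emph{not} determined by the symbols of $\E$: the ideal $\mathcal{J}'$, hence the orders and number of generators of $\ti\E$, depends on lower-order coefficients of $\E$ --- the relative invariants --- which is precisely the source of the branching into situations (1)--(3) of \S\ref{S24}. Compare $\Upsilon_{22}^{\rm a}$ and $\Upsilon_{22}^{\rm b}$: identical symbol data, but in one branch $\ti\E$ is $E_1$ and in the other a Frobenius pair with a completely different symbol sequence (and class $\oo=0$). So a purely symbolic ``saturate and divide by $X$'' functor cannot decide how many ``cells'' are lost, and neither the strict decrease nor the generic drop-by-one can be extracted from the symbol alone. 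This shows up in your own arithmetic: in the saturated $kE_k$ model you compute a difference $\frac{(k-1)k}2-\frac{(k-2)(k-1)}2=k-1$, whereas the generic transformed system is not of type $(k-1)E_{k-1}$ (e.g.\ $3E_3\to E_2+E_3$, complexity $3\to2$), and the reconciliation is hidden in the unspecified ``lower-order terms'' --- exactly the part of the count you do not carry out; the subsequent assertion that ``only one diagram cell is actually lost'' is the theorem itself, restated rather than proved. To close the argument along your lines you would have to bring in the coefficient information (the case analysis of \S\ref{S11}--\ref{S14}, or an algebraic substitute for it), or else switch to the paper's device of applying $\p_x$ to the closed-form solution representation, where the gauge normalization $\a_q=\a_q(y)$ does the counting for you.
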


 \begin{proof}
By re-scaling the unknown function $u\mapsto \sigma\cdot u$ we can achieve $X=\p_x$, and
we adopt this convention in the proof.

Let us interpret complexity $\varkappa$ via Cauchy data, namely a general solution of $\E$
depends on one function of 1 variable, its $q$ derivatives and on $(\varkappa-q)$ constants.
This follows from Cartan-K\"ahler theorem, and generically $q=\varkappa$.
Moreover these are linear superpositions
 $$
u=\a_q f^{(q)}+\dots+\a_1f'+\a_0f+ c_1\phi_1+\dots+c_{\varkappa-q}\phi_{\varkappa-q},
 $$
where $\a_i=\a_i(x,y)$, $\phi_j=\phi_j(x,y)$ are some fixed functions,
$f=f(y)$ is an arbitrary function and $c_k$ are arbitrary constants.

It is easy to check that $\a_q$ depends only on $y$ (this was the reason for
the basic gauge as it is equivalent to $u\mapsto u/\a_q$),
so that the transformation $v=u_x$ reduces $\varkappa$ (only by 1
if $(\a_{q-1})_x\ne0$ and $(\phi_i)_x\ne0$).
 \end{proof}

Generically only transformations (1) from \S\ref{S24} are used and so the
general solution is obtained from the composition of inverse Laplace operators
$u=A[f]$, where $f$ is an arbitrary function of 1 argument and $A$ a linear
differential operator of order $\varkappa$.

Transformations (2) with $L$ solving Frobenius system of order 1 and transformations (3) from \S\ref{S24}
with reduced system of order 1 and class $\oo=0$ (both cases belong to a generic stratum of the space
of singular $\E$) are equally good for Theorem \ref{Thm}, because linear finite type scalar
systems of order 1 are solvable in quadratures.

On the other hand, finite type systems of higher order are usually non-integrable in quadratures
(and a theorem of S.Lie about solvable symmetry groups indicates when such integration is possible).

For example, if the reduced system $\tilde\E$ (or the system for inverse $L$) has type $E_1+E_2$ and class
$\oo=0$:
 $$
\left\{ \begin{array}{r}
 X^2\,u+a_1Xu+b_1u=0,\\
 Yu\,+a_2Xu+b_2u=0,
\end{array}\right.
 $$
then it is generically non-solvable in quadratures. Indeed the first PDE is equivalent to a Riccati equation
via a substitution $u=\exp\int z\,dx$.

Finally note that the algorithm for the Laplace transformation from Section \ref{S31}
together with the first part of the proof of Theorem \ref{SHA} imply that we can perform Laplace
transformations so long the reduction has type $\oo=1$. Theorem \ref{123} follows.

\subsection{The role of relative invariants}\label{S33}

The relative invariants found in the cases of \S\ref{S11}-\ref{S14} play the same fundamental role
as the Laplace invariants, and they govern the branching into sub-cases of the types of $\E$,
determining kind (1), (2) or (3) of the Laplace transformation.

On the other hand we know that these kinds specify the form of the general solution, namely
the amount $(\varkappa-q)$ of constants $c_k$ it contains. These constants are the first integrals
of the system $\E$ and so can be detected by the internal geometry.

Thus the generalized Laplace invariants of $\oo=1$ type systems are the obstructions
to existence of the first integrals.

There can be one such differential invariant
(as in the case $\Upsilon_{23}^1$) or several (as in the case $\Upsilon_{33}^1$),
and there can also occur conditional invariants of various depth (that are defined
only on the stratum given by vanishing of the previous invariants, as in the case $\Upsilon_{23}^2$).

In our approach we first applied Laplace transformations to achieve decrease of the
number $q$ of the involved derivatives, and then had to find a $(\varkappa-q)$-parametric solution
of $\oo=0$ type system.

But equally well we could first restrict to the level leaf of the first integrals (freezing
all the constants) and then solve a generic $\oo=1$ type system.

\bigskip

\section{Towards nonlinear theory}\label{S4}

\subsection{Linearization theorem}\label{S41}

From the internal viewpoint the system $\E\subset J^k(\R^2)$, $k=k_{\max}$, is a submanifold
endowed with the induced distribution $\mathcal{C}_\E=\mathcal{C}\cap T\E$, where
$\mathcal{C}$ is the canonical Cartan distribution on the space of jets.

We assume $\E$ involutive, otherwise we need to prolong it to the jet level $k$
such that $\dim g_k=1$. Then the distribution $\mathcal{C}_\E$ has rank 3 --
it is generated by the vector fields $\D_x$, $\D_y$ (restricted total derivatives)
and the vertical field (kernel of the projection $T\E\to TJ^{k-1}$).

The first of these fields (or its combination with the third field) is a lift of our characteristic $X$
and it coincides with the Cauchy characteristic $\hat X$ for this rank 3 distribution. Denote the (local) quotient
manifold by $\bar\E=\E/\hat X$ and the quotient rank 2 distribution by $\Pi=\mathcal{C}_\E/\hat X$.

Let $\Pi_\infty$ be the bracket-closure of the (generally non-holonomic) distribution $\Pi$.
We assume it is regular. The next claim is obvious.

 \begin{prop}
The amount of constants in the general solution of $\E$ equals $q=\op{codim}(\Pi_\infty)$,
which also coincides with the codimension of the bracket-closure of $\mathcal{C}_\E$ in $\E$. \qed
 \end{prop}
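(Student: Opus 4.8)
The plan is to unravel the definitions and observe that the claim is essentially a restatement of the correspondence between Cauchy data for $\E$ and the growth of the quotient distribution $\Pi$. First I would recall from the proof of Theorem~\ref{SHA} the interpretation of complexity: a general solution of $\E$ depends on one function of one variable, $q$ of its derivatives, and $(\varkappa-q)$ arbitrary constants. The arbitrary constants are precisely the first integrals of $\E$ (functions on $\E$ constant along every solution), and the functional parameter together with its $q$ derivatives accounts for the "genuinely infinite-dimensional" part of the solution space. So the number $q$ to be computed is the complexity minus the number of functionally independent first integrals.

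Next I would translate this into the language of distributions. On the involutive prolongation, $\mathcal{C}_\E$ is a rank-$3$ distribution with one-dimensional Cauchy characteristic space spanned by $\hat X$ (the lift of the characteristic $X$); this is exactly what is used to form $\bar\E=\E/\hat X$ and $\Pi=\mathcal{C}_\E/\hat X$. A function on $\bar\E$ is a first integral of $\Pi_\infty$ (equivalently, is annihilated by all iterated brackets of sections of $\Pi$) if and only if it is constant on every integral curve of $\mathcal{C}_\E$, i.e. if and only if its pullback to $\E$ is a first integral of $\E$. Hence the number of independent first integrals equals $\op{codim}\Pi_\infty$ in $\bar\E$, which equals $\op{codim}$ of the bracket-closure of $\mathcal{C}_\E$ in $\E$ (the $\hat X$-direction already lies in the bracket-closure, being a Cauchy characteristic, so quotienting by it does not change the codimension).

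The remaining step is to identify $q$ with this count, i.e. to check that the number of derivatives of the free function appearing in the general solution equals $\varkappa$ minus the number of first integrals. This follows by applying the Cartan--K\"ahler / Cartan-test description already invoked in the proof of Theorem~\ref{SHA}: the solution space of $\E$ fibers over the (finite-dimensional) space of values of the first integrals, and on each fiber the solution is parametrized by one function of one variable together with $q$ of its derivatives, where $q+(\#\text{first integrals})=\varkappa$ by the very definition of $\varkappa$ as $\sum(\dim g_i-1)$ and the Cauchy-data count $\varkappa$ used before. Thus $q=\varkappa-\op{codim}\Pi_\infty$... wait, rather $q=\varkappa-(\varkappa-q)$ tautologically, and the content is that $(\varkappa-q)=\op{codim}\Pi_\infty$, which is what the previous paragraph established; so $q=\op{codim}\Pi_\infty$ after accounting for the normalization --- more precisely, since the total Cauchy-data count is $\varkappa$ and the constants number $\op{codim}(\text{bracket-closure})$... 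I would simply state it as: $q$ counts the non-integral (functional) directions, each first integral removes one unit of "constant" freedom, and the bracket-generating directions of $\Pi$ are exactly the non-first-integral directions, giving $q=\op{codim}\Pi_\infty$ directly. The main obstacle is purely bookkeeping: making precise the claim that "derivatives of the free function" correspond bijectively to the bracket-generated directions of $\Pi$ while "constants" correspond to first integrals, which is where one must be careful that the quotient by $\hat X$ does not lose or gain dimension --- but this is guaranteed because $\hat X$ is a Cauchy characteristic of $\mathcal{C}_\E$ and hence lies in $(\mathcal{C}_\E)_\infty$. Given this, the proposition is "obvious" as the authors say, and I would keep the argument to a few lines.
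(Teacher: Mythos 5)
Your argument is correct, and in fact the paper gives no proof at all here (the claim is declared obvious), so your write-up supplies precisely the reasoning left implicit: the constants in the general solution are the first integrals of $\E$ (as the paper itself notes in \S\ref{S33}), the number of independent first integrals equals the codimension of the bracket-closure of $\mathcal{C}_\E$ by regularity and Frobenius applied to that closure, and passing to $\Pi_\infty$ on $\bar\E$ changes nothing because the Cauchy characteristic $\hat X$ already lies in $\mathcal{C}_\E$ and hence in its closure. Two small points to tidy: the $q$ of this Proposition is the number of \emph{constants}, not the $q$ of Theorem~\ref{SHA} (which counts derivatives of the free function) --- this clash is what caused the wobble in your last paragraph, and the clean statement is simply $\#\{\text{constants}\}=\op{codim}\Pi_\infty$; and the implication ``constant on every solution $\Rightarrow$ constant along all of $\mathcal{C}_\E$'' deserves one explicit line, namely that the tangent planes of the one-parameter family of prolonged solutions through a point span the rank-$3$ space $\mathcal{C}_\E$ (the $(k+1)$-jet freedom $g_{k+1}\ne0$ tilts the planes in the vertical direction), after which annihilation of brackets is automatic.
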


A regular distribution $\Delta_1=\Pi$ generates the following: the weak derived flag $\Delta_i$
via the commutator of sections $\Gamma(\Delta_{i+1})=[\Gamma(\Delta_1),\Gamma(\Delta_i)]$ and
the strong derived flag via $\Gamma(\nabla_{i+1})=[\Gamma(\nabla_i),\Gamma(\nabla_i)]$, $\nabla_1=\Delta_1$.
The sequences of their ranks are called the weak and strong growth vectors respectively.

Goursat distribution is the canonical rank 2 Cartan distribution of the jet-space $J^k(\R)$
(we exclude singularities). If the distribution is not totally-nonholonomic, we can restrict
to the leaves of the foliation of its bracket-closure. Distribution will be called \emph{Goursat-Frobenius}
if it is Goursat in all leaves, i.e. locally the distribution $\mathcal{C}\times0$ on $J^k\R\times\R^m$.

 \begin{theorem}
An involutive $\oo=1$ type system is internally (micro-locally) linearizable if and only if
the corresponding rank 2 (regular) distribution $\Pi$ is Goursat-Frobenius.
 \end{theorem}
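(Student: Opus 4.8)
The plan is to prove both directions by passing through the symbolic model $\mathcal{C}\times 0$ on $J^k\R\times\R^m$ and matching it against the two ``coordinates'' of an $\oo=1$ system: the part carrying the arbitrary function of one variable (governed by the Goursat tower) and the part carrying the constants (the Frobenius leaves, of dimension $q=\op{codim}(\Pi_\infty)$). For the (easy) forward implication I would start from a linear, or more generally point-linearizable, system $\E$ and compute $\Pi$ explicitly: after straightening $X=\p_x$ and performing the basic gauge, the reduction by the Cauchy characteristic $\hat X$ leaves a rank 2 distribution whose weak growth vector is $(2,3,4,\dots)$ up to the step where the bracket-closure stabilizes, as already announced in the introduction (``Goursat distributions with growth vector $(2,3,4,\dots)$''). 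Since $\Pi_\infty$ is assumed regular, its leaves carry a rank 2 distribution with the maximal-growth (Goursat) vector truncated at $\op{codim}(\Pi_\infty)$, and on the complementary directions $\Pi$ is trivial; this is precisely the Goursat--Frobenius condition. The one genuine point here is to see that the $q$ constants in the general solution (Proposition on $q=\op{codim}(\Pi_\infty)$) are exactly the parameters of the Frobenius foliation, so that the residual distribution in each leaf really is a full Goursat flag and not a shorter one.

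For the converse, assume $\Pi$ is Goursat--Frobenius: locally $\bar\E$ splits as $(J^p\R,\mathcal{C})\times(\R^q,0)$ for some $p$ with $p+? $ matching $\varkappa$. I would reconstruct $\E$ from this by reversing the quotient: $\E$ is a rank 3 distribution with a one-dimensional Cauchy characteristic $\hat X$ whose quotient is $\Pi$. Because the Cartan distribution on $J^p\R$ has a canonical set of first integrals and a canonical ``last'' function playing the role of the highest jet variable, I can use these functions --- pulled back through the quotient and completed by the $q$ Frobenius first integrals --- as candidate coordinates on $\E$. The claim is that in these coordinates the induced Cartan equations on $\E$ are linear in the fibre variable. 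Equivalently: a rank 2 Goursat distribution is point-equivalent to the Cartan distribution on $J^p\R$ (this is the classical Goursat normal form, which I am entitled to invoke), and the extra Frobenius directions contribute only a product with a trivial foliation; lifting such a product structure back up along a line bundle (the $\hat X$-direction) with the correct weights produces exactly a linear $\oo=1$ system in jet coordinates. So the external linear model is forced by the internal product-of-Goursat-and-trivial model.

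The step I expect to be the main obstacle is the converse's bookkeeping at the level of \emph{orders}: the Goursat normal form gives a distribution equivalent to $\mathcal{C}$ on $J^p\R$, but I must check that when this model is lifted back through the $\hat X$-fibration the resulting PDE system has the symbol type of an honest $\oo=1$ system (one simple characteristic, correct Spencer cohomology $h^0=1$, $h^1=r$, $h^2=r-1$) and, crucially, that the lifted Cartan forms depend \emph{affinely} --- not just smoothly --- on the vertical coordinate. Affineness is what ``linearizable'' means, and it is not automatic from the internal geometry alone: it corresponds to the weak and strong growth vectors of $\Pi$ agreeing in the Goursat way \emph{together with} the line bundle $\hat X\to\E\to\bar\E$ being flat in the appropriate gauge. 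I would handle this by invoking the basic-gauge normalization from Section \ref{S3} (the coefficient $\a_q$ depending on $y$ only), which is exactly the statement that the relevant bundle can be trivialized so that the lift is linear; the regularity hypotheses on $\Pi_\infty$ and on $\E$ being involutive are what make this gauge choice global (micro-locally). Assembling these pieces --- Goursat normal form internally, flat line-bundle lift externally, symbol/cohomology check --- gives the equivalence.
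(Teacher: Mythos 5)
Your skeleton is essentially the paper's: for the forward direction a direct computation on the linear model showing that both the weak and the strong derived flags of $\Pi$ grow by one at each step, and for the converse the Goursat normal form together with reconstruction of the rank~3 distribution by lifting along the Cauchy characteristic $\hat X$. One caution on the converse as you set it up: "the classical Goursat normal form" must be invoked in the form the paper cites (Cartan--von Weber), i.e.\ with \emph{both} the weak and the strong growth vectors equal to $(2,3,4,\dots)$; growth of the strong flag alone does not exclude the singular (Kumpera--Ruiz type) points at which the distribution is not equivalent to the Cartan distribution, which is why the paper's definition of Goursat explicitly excludes singularities.

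The genuine flaw is your last paragraph. The theorem claims \emph{internal} (micro-local) linearizability only, and micro-locally the internal data $(\E,\mathcal{C}_\E)$ is completely determined by $\Pi$: since $\hat X$ is a Cauchy characteristic, flow-box coordinates identify $(\E,\mathcal{C}_\E)$ with the pullback of $(\bar\E,\Pi)$ along a trivial line fibration. So once $\Pi$ is the model $\mathcal{C}\times0$ on $J^p\R\times\R^q$, any linear $\oo=1$ system realizing the same model (such systems exist, cf.\ Section \ref{S1}) is internally equivalent to $\E$, and there is no residual condition of "affine dependence on the vertical coordinate" or "flatness of the line bundle" to verify. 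That requirement pertains to \emph{external} linearization, which is a strictly stronger and different problem --- precisely the distinction illustrated by Cartan's $E_2+E_3$ example quoted in the introduction and by the Liouville system of \S\ref{S42}, which is internally but not externally (Moutard) linearizable; demanding it would make the stated equivalence false. Moreover, your proposed way of discharging it --- invoking the normalization $\a_q=\a_q(y)$ from the proof of Theorem \ref{SHA} --- is inadmissible here: that normalization is established for systems already known to be linear, so using it to produce the linearization of $\E$ is circular. Deleting the affineness discussion and replacing it by the Cauchy-characteristic reconstruction argument closes your converse and brings it in line with the paper's proof.
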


 \begin{proof}
It is straightforward to check that both weak and strong flags grow in dimension by 1, so they
are Goursat-Frobenius. The opposite direction is given by Cartan-von Weber theorem,
which states that a rank 2 distribution has locally Goursat normal form
if and only if both weak and strong growth vectors are $(2,3,4,5,\dots)$.
 \end{proof}

As for external linearization, the responsible differential invariants are known only in some
partial cases. The classical case of the 2nd order scalar ODE is due to S.\,Lie and R.\,Liouville
\cite{L$_2$}. For higher order scalar ODEs the contact trivialization result is known \cite{Dou},
and for 3rd and 4th order ODEs the linearization is done (\cite{IMS,SMI} and the references therein),
but to our knowledge no general linearization criterion is known (even for $\oo=0$ type).

For $\oo=1$ systems we expect our generalized Laplace invariants (which exist also in the non-linear situation)
to play an important role in this classification problem.

 \begin{cor}
If a PDE system $\E$ of $\oo=1$ type is effectively linearizable,
then $\E$ possesses a closed form of the general solution.
 \end{cor}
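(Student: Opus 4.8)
The plan is to combine Theorem~\ref{Thm} with the definition of effective linearizability and with the transparency of the internal geometry of $\oo=1$ systems. The statement asserts that if $\E$ is effectively linearizable, then it has a closed-form general solution, so the first step is to unwind what ``effectively linearizable'' buys us: there is an explicit (algorithmic) differential substitution $w=\Phi[u]$, with differential inverse modulo $\E$, carrying $\E$ to a \emph{linear} system $\E^{\rm lin}$ of class $\oo=1$ with the same characteristic $X$ (the class is an invariant of the symbol, which is preserved under such external substitutions, and $\oo$ is also read off from $\lim\dim g_k$, which the substitution preserves). Thus $\E^{\rm lin}$ is one of the species in the zoo of Section~\ref{S2}.

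Next I would apply Theorem~\ref{Thm} to $\E^{\rm lin}$: for generic linear $\E^{\rm lin}$ of class $\oo=1$ the general solution is obtained by a finite composition of (inverse) generalized Laplace transformations applied to an arbitrary function of one variable, possibly followed by one quadrature --- this is exactly the ``closed form and quadratures'' conclusion, and by Theorem~\ref{SHA} the process terminates after $\varkappa$ steps. In the singular strata $\E^{\rm lin}$ reduces to a class $\oo=0$ system; here one must be a little careful, because a higher-order finite-type linear system need not integrate in quadratures. However, the iteration in Theorem~\ref{123}/\ref{SHA} is arranged so that the reduction to $\oo=0$ only ever occurs through transformations of kind (2) or (3), i.e. with a reduced (or inverse-defining) system of \emph{order one} and class $\oo=0$, and such systems are linear scalar ODEs of first order, hence solvable by quadratures (as noted right after the proof of Theorem~\ref{SHA}). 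So in every branch $\E^{\rm lin}$ --- and therefore, pulling back along the differential inverse of $\Phi$, the original $\E$ --- admits a general solution written in closed form, i.e. as a differential operator plus at most quadratures applied to an arbitrary function of one variable and finitely many constants.

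The final step is bookkeeping: compose the closed-form solution formula for $\E^{\rm lin}$ with the differential inverse $\Phi^{-1}$ of the linearizing substitution to produce a closed-form solution of $\E$ itself; since $\Phi^{-1}$ is a differential operator (modulo the equation) and the solution formula for $\E^{\rm lin}$ is a differential operator (plus quadratures) applied to arbitrary data, the composite is again of this shape. Here one should remark that ``effectively'' is what makes this constructive --- the substitution $\Phi$ and its inverse are produced algorithmically, so the whole recipe is an algorithm --- but even without effectiveness the mere existence of a linearization suffices for the existence of a closed form, which is all the corollary claims.

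The main obstacle I expect is not in the logical skeleton, which is essentially immediate from Theorem~\ref{Thm}, but in pinning down precisely what invariance properties survive an external differential substitution: one needs that such a substitution preserves class $\oo=1$ and compatibility, and --- for the singular strata argument --- that it does not secretly raise the order of the $\oo=0$ reduction past one. Making the reduction-to-order-one claim airtight requires re-examining the branching analysis of \S\ref{S11}--\ref{S14} (or the general complexity argument of Theorem~\ref{SHA}) to confirm that the order-one $\oo=0$ systems are exactly the ones that appear, rather than, say, an $E_1+E_2$ finite-type system of the kind shown to be non-quadrature-integrable at the end of \S\ref{S32}. For a corollary-level statement it is reasonable to simply invoke Theorem~\ref{Thm} and the discussion following Theorem~\ref{SHA} and leave this verification to the reader, noting that ``closed form'' is meant in the sense of that theorem (differential operators and quadratures).
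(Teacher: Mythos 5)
There is a genuine gap in your handling of the singular strata. To close that branch you assert that the reduction to class $\oo=0$ ``only ever occurs through transformations of kind (2) or (3) with a reduced (or inverse-defining) system of order one'', but this is contradicted by the paper's own analysis: the Frobenius-type reductions and inverses in \S\ref{S11}--\ref{S14} are frequently of order two (e.g. the branches $\Upsilon_{23}^{1{\rm b}}$, $\Upsilon_{23}^{2{\rm c}}$, $\Upsilon_{33}^{2{\rm d}}$, $\Upsilon_{333}^{{\rm e}}$), and \S\ref{S32} exhibits explicitly a reduced system of type $E_1+E_2$ and class $\oo=0$ which is equivalent to a Riccati equation and hence generically \emph{not} solvable in quadratures. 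This is precisely why Theorem \ref{Thm} bifurcates: in the singular case one obtains only a reduction to ODEs, not a closed form, so your external-linearization route cannot conclude in that branch without an extra hypothesis. Your closing remark that ``even without effectiveness the mere existence of a linearization suffices'' is also off the mark: effectiveness is exactly what the corollary needs, since the statement is about producing a closed-form solution, not about abstract equivalence.

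The paper's intended argument is different and sidesteps this issue. The corollary is attached to the internal linearization theorem of \S\ref{S41}: ``effectively linearizable'' means that the transformation bringing the associated rank $2$ distribution $\Pi$ to the Goursat(-Frobenius) normal form --- a rectification of vector fields, i.e. a solution of auxiliary ODEs --- is algorithmically computable, for instance in quadratures when a solvable symmetry algebra is available. Once the system is in the Goursat normal form, its general solution is written down at once (the jet of an arbitrary function of one variable, together with finitely many constants), with no generic/singular dichotomy remaining; composing with the effectively computed transformation yields the closed form for $\E$. In other words, the obstruction you tried to argue away structurally (auxiliary finite-type systems that need not integrate in quadratures) is absorbed into the hypothesis ``effectively'' rather than eliminated. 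If you wish to keep your route through an external linearization $\E^{\rm lin}$ and Theorem \ref{Thm} (with Theorem \ref{SHA} controlling the number of steps), you must either restrict to the generic stratum of $\E^{\rm lin}$ or add the assumption that the residual class $\oo=0$ systems arising in the singular branches are themselves effectively integrable.
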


Here 'effectively' stays for algorithmic computability of the linearization transformation.
This means the following. A Goursat distribution can be transformed to the normal form by rectifying some
vector fields. In the presence of symmetry algebra with nice (e.g. solvable) Lie structure this solution
to ODEs can be made effective (for instance in quadratures).

\subsection{Examples of nonlinear Laplace transformations}\label{S42}\qquad

{\bf 1.} Consider the Liouville equation together with one of its Laplace integrals
 \begin{equation}\label{Liou}
u_{xx}=\tfrac12u_x^2,\ u_{xy}=e^u.
 \end{equation}
This quasi-linear system is compatible of class $\oo=1$, and it possesses no intermediate integral.
Laplace transformation here is the same as in the linear case $v=u_x$.

The first PDE of (\ref{Liou}) gives the transformed equation $E_1$:
 $$
v_x=\tfrac12v^2.
 $$
The second equation of (\ref{Liou}) yields the inversion: $u=\log v_y$.

The solution $v=-2/(x+\psi(y))$ to $E_1$ provides the solution to (\ref{Liou}) via inversion:
 $$
u=\log\frac{2\psi'(y)}{(x+\psi(y))^2}.
 $$

Notice that (\ref{Liou}) is not linearizable by an external transformation of Moutard type
(preserving $x,y$), but substitution $w=-2u_x^{-1}-x+2xe^uu_x^{-2}$ makes an internal diffeomorphism of
this system with the linear $2E_2$
 \begin{equation}\label{2E2w}
w_{xx}=0,\ w_{xy}=(w_y-w_x)/x.
 \end{equation}
However according to E.Cartan \cite{C$_1$} a Lie-Backlund type theorem holds in this case,
so that every internal transformation is induced by a contact transformation. We can modify our
external transformation to the following contact equivalence between (\ref{Liou}) and (\ref{2E2w}):
 $$
x\mapsto\frac{u_x}{2e^u},\ y\mapsto y,\ u\mapsto w=-x-\frac1{u_x},\
u_x\mapsto w_x=\frac{2e^u}{u_x^2},\ u_y\mapsto w_y=\frac{u_y}{u_x}.
 $$
In fact, one can show that (\ref{2E2w}) is the normal form for linearizable $2E_2$ without intermediate integrals
(in the latter case we get $u_{xx}=u_{xy}=0$).

 \begin{rk}
The standard B\"acklund transformation that linearizes the Liouville equation maps
(\ref{Liou}) into non-linear system $v_{xx}=\frac12v_x^2$, $v_{xy}=0$.
 \end{rk}

{\bf 2.} Consider another quasi-linear system
 \begin{equation}\label{sqrt2}
u_{xx}=-2\frac{u_x}{x+y},\ u_{xy}=2\frac{\sqrt{u_xu_y}}{x+y}.
 \end{equation}
for which the second equation is also Darboux integrable.

Substitution $u_y=Q^2$ transforms (\ref{sqrt2}) into
 \begin{equation}\label{R41}
Q_{xx}=-\frac{2Q_x}{x+y},\ Q_{xy}=-\frac{Q_x}{x+y}+\frac{Q}{(x+y)^2}.
 \end{equation}
This is a linear $2E_2$ and it has Laplace transform $v=Q_x$ to
 $$
E_1:\ \ v_x=-\frac{2v}{x+y}
 $$
with the inverse $Q=(x+y)^2v_y+(x+y)v$. But the lift (\ref{R41}) to (\ref{sqrt2})
is given by the (compatible) Frobenius system
 $$
u_x=(x+y)^2Q_x^2,\ u_y=Q^2.
 $$

Another nonlinear Laplace transform is given by the following
differential substitution with differential inverse:
 $$
u=w-xw_x-\frac{w_x^2}{x+y}\ \rightleftarrows\ w=u+(x+y)u_x+x(x+y)\sqrt{u_x}.
 $$
This transforms (\ref{sqrt2}) into another quasi-linear system $2E_2$
 $$
w_{xx}=0,\ w_{xy}^2+xw_{xy}-w_y=0.
 $$
This readily yields the general solution $u=\phi(y)-\frac{\psi(y)^2}{x+y}$, $\phi'(y)=\psi'(y)^2$
of (\ref{sqrt2}), and consequently its closed (non-Moutard) form solution:
 $$
x=s,\ \ y=\z''(t),\ \
u=t^2\z''(t)-2t\z'(t)+2\z(t)-\frac{(t\z''(t)-\z'(t))^2}{s+\z''(t)}.
 $$

\medskip

{\bf 3.} Consider fully non-linear system of type $2E_2$
 \begin{equation}\label{ECeq1}
3\,u_{xx}\,u_{yy}^3=-1,\quad u_{xy}=-1/u_{yy},
 \end{equation}
where the first equation belongs to the Goursat's examples of Darboux integrable systems \cite{G}.

This case is more complicated and the nonlinear Laplace transformation is given by $\l=u_{xy}^2$
(notice it is of the 2nd order). The result of the transformation has type $E_1$ and is the equation of gas dynamics
 $$
\l_x=\l\,\l_y.
 $$
The inverse is however not differential, but is given by the Frobenius system
(compatible $3E_2$ of class $\oo=0$):
 $$
u_{xx}=\tfrac13\l^{3/2},\ u_{xy}=\l^{1/2},\ u_{yy}=-\l^{-1/2},
 $$
and thus (\ref{ECeq1}) is solvable by quadratures.
Notice that the choice of $\l$ corresponds to the characteristic of (\ref{ECeq1}).

\subsection{Examples of nonlinear Laplace invariants}\label{S43}
Consider
 $$
 \E: \left\{ \begin{array}{l}
 F(x,y,u,u_x,u_y,u_{xx},u_{xy},u_{yy})=0,\\
 G(x,y,u,u_x,u_y,u_{xx},u_{xy},u_{yy})=0,
\end{array}\right.
 $$
Linearization of this system has the form
 $$
 \ell_\E: \left\{ \begin{array}{l}
 \,X^2\,U+a_1XU+b_1YU+c_1U=0,\\
 YXU+a_2XU+b_2YU+c_2U=0,
\end{array}\right.
 $$
Compatibility implies $b_1=0$. Notice that this does not follow directly from the
arguments for the linear case, because the bracket of linearizations
differs from the linearization of the bracket \cite{K$_1$}. However
we can use the following straightforward statement:
 \begin{prop}
For nonlinear operator $F\in\op{diff}$ and an operator $L$ in total derivatives we have:
$\ell_{L\cdot F}-L\cdot\ell_F=0\mod\langle\D_\z F\rangle$. \qed
 \end{prop}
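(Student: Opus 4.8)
The statement to prove is $\ell_{L\cdot F}-L\cdot\ell_F=0\bmod\langle\D_\z F\rangle$, where $F$ is a (possibly vector) nonlinear differential operator, $L$ is an operator in total derivatives, and $\D_\z$ denotes total derivatives along the fibre directions. The plan is to unwind both sides as formal expressions in the jet variables and compare them term by term, using the Leibniz-type rule for linearization of a composition.

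First I would recall the two basic facts about linearization (universal linearization / Fréchet derivative) that make the computation mechanical. For a nonlinear operator $F$ the linearization is $\ell_F=\sum_{\z}\dfrac{\p F}{\p u_\z}\,\D_\z$, a linear operator in total derivatives whose coefficients are functions on the jet space. For a \emph{linear} operator $L=\sum_\t b_\t\D_\t$ in total derivatives one has $\ell_L=L$ acting on the linearized unknown, but the point is that $L$ also has jet-dependent coefficients $b_\t$, so when we linearize the composition $L\cdot F$ — i.e. the operator $u\mapsto L[F[u]]$ — the chain rule produces two groups of terms: (a) $L$ applied to $\ell_F$, coming from differentiating inside $F$; and (b) the terms where the derivative falls on the coefficients $b_\t$ of $L$, which contain factors $\dfrac{\p b_\t}{\p u_\z}\,\D_\z F$. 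Step one is therefore to write out $\ell_{L\cdot F}=L\cdot\ell_F+R$ with $R=\sum_{\t,\z}\big(\D_\t\tfrac{\p b_\t}{\p u_\z}\cdots\big)\,(\D_\z F)\cdots$ collected explicitly; the precise combinatorics is routine and I would not grind through it.

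The second and decisive step is to observe that every term of the remainder $R$ carries a factor of the form $\D_\z F$ (a total derivative of a component of $F$), so $R$ lies in the left ideal $\langle\D_\z F\rangle$ generated by all total derivatives of the components of $F$ in the algebra of operators in total derivatives. On the equation $\E=\{F=0\}$ (and its prolongations) all such $\D_\z F$ vanish, which is exactly what the congruence $0\bmod\langle\D_\z F\rangle$ records. This is why the identity is stated modulo that ideal rather than as an exact equality: off the equation the extra terms are genuinely present, but they are irrelevant for compatibility analysis because we only ever evaluate linearizations on solutions of $\E$.

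The only mild subtlety — and the step I would treat most carefully — is bookkeeping when $F$ is vector-valued and $L$ is a row of operators in total derivatives, so that $L\cdot F=\sum_k L_k\,F_k$ and the ideal $\langle\D_\z F\rangle$ is generated by $\D_\z F_k$ over all $k$ and all multi-indices $\z$; one must check that the cross-terms from differentiating the coefficients of $L_k$ are still proportional to $\D_\z F_k$ and not to $\D_\z F_{k'}$ with $k'\neq k$, which is immediate from the product structure. Once that is in place, the statement follows, and in particular applying it to the compatibility operator $L$ that produces the compatibility condition of $\ell_\E$ shows, as claimed in the surrounding text, that the linearized system satisfies $b_1=0$ even though the bracket of linearizations differs from the linearization of the bracket.
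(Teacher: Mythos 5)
Your argument is correct: the Leibniz/chain-rule expansion of $\ell_{L\cdot F}$ produces $L\cdot\ell_F$ plus remainder terms in which the derivative hits the jet-dependent coefficients $b_\tau$ of $L$, and each such term carries a factor $\D_\tau F$, hence vanishes modulo $\langle\D_\z F\rangle$, i.e. on $\E^{(\infty)}$. The paper states the proposition without proof as a straightforward fact, and this direct computation is exactly the intended argument (the slight index conflation in your remainder formula and the left/right ideal terminology are immaterial, since the ideal is generated by all total derivatives of the components of $F$).
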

Now the system $\{F=G=0\}$ for the operators of the above form is compatible iff
there does not exist operators in total derivatives ($C$-differential operators) $L_1,L_2$
such that $\ell_{L_1\cdot F+L_2\cdot G}\in\langle1,X,Y,Y^2\rangle$,
which, by the above proposition, implies $L_1\cdot\ell_F+L_2\cdot\ell_G\in\langle1,X,Y,Y^2\rangle$
on the equation $\E^{(\infty)}$. And this would imply $b_1=0$ and in the case $b_2=0$ also $c_1=0$
for the linearized system $\ell_\E$.

\medskip

The gauge $\bar X=X+b_2$ transforms the system $\ell_\E$ to
 $$
 \left\{ \begin{array}{l}
 \,\bar X^2\,U+\bar a_1\bar XU+\bar c_1U=0,\\
 Y\bar XU+\bar a_2\bar XU+\bar c_2U=0,
\end{array}\right.
 $$
where $\bar a_1=a_1-2b_1$, $\bar c_1=c_1-a_1b_2-\bar X(b_2)=0$
(due to compatibility), $\bar a_2=a_2$ and $\bar c_2=c_2-a_2b_2-Y(b_2)$
(notice similarity of the expressions for $\bar c_1,\bar c_2$ with
the classical Laplace invariants, see Appendix \ref{SA}).

This function $\bar c_2$ is a relative invariant with respect to gauge transformations. It
will be called {\em nonlinear Laplace invariant\/} of the problem.
It vanishes precisely when there exists a (higher) intermediate integral $I$ for the system,
i.e a function $I$ on jets with the vanishing total differential $\hat d(I)=0$ due to the system $\E$.

In our case $\bar c_2=0$ implies existence of an integral $I$.
Indeed, the substitution $V=\bar XU$ reduces $\ell_\E$ to the Frobenius system
 $$
(\bar X+\bar a_1)V=0,\quad (Y+\bar a_2)V=0
 $$
with an intermediate integral $\Phi(x,y,V)=c$, which can be integrated to give
the integral $\phi(x,y,\nabla u)=c$ for the original system.

\medskip

\underline{Example 1.} The Liouville equation coupled with $\D_y$-intermediate integral
is a compatible $2E_2$ of class $\oo=1$:
 $$
u_{xx}-\tfrac12u_x^2=0,\quad u_{xy}-e^u=0.
 $$
Its linearization writes
 $$
X^2U-u_x\,XU=0,\quad YXU-e^uU=0.
 $$
Already in this form $b_1=b_2=c_1=0$ and so $c_2=e^u\ne0$. Consequently this pair
(and the Liouville equation alone) has no intermediate integrals.

\medskip

\underline{Example 2.} Now consider such $2E_2$:
 $$
u_{xx}=u_xe^u,\quad u_{xy}=u_ye^u.
 $$
with linearization
 $$
X^2U-e^uXU-u_xe^uU=0,\quad YXU-e^uYU-u_ye^uU=0.
 $$
In accordance with the above theory $b_1=0$, but $b_2=-e^u\ne0$ (starting from these coefficients
we can calculate by the above formula $\bar c_2=0$). So we make a change of frame
$\bar X=X-e^u$ and get the system
 $$
X\bar XU=0,\quad Y\bar XU=0,
 $$
with an obvious intermediate integral $\bar XU=c$. This linearization can be integrated
to obtain the first integral of the original $2E_2$: $I=u_x-e^u$.

 \medskip

Now let us turn to the case $E_2+E_3$, for which the theory is similar.

\smallskip

\underline{Example 3.} Consider the following linear $\oo=1$ system of type $\Upsilon^{2a}_{23}$
 $$
u_{xx}=0,\quad u_{xyy}-xu_{xy}+u_y=0.
 $$
Laplace transformation $v=u_x$ maps this to $E_1:v_x=0$, and the inverse
can be found from the Frobenius system
 \begin{equation}\label{tre}
u_x=v,\quad  u_y=xv_y-v_{yy}.
 \end{equation}
Integration yields the intermediate integral: $I=u_{xy}-xu_x+u=c$.

 \begin{rk}
Existence of integral $I$ here does not follow from vanishing of a relative differential invariant:
we have $d_2\ne0$ in the notations of $\Upsilon^{2a}_{23}$. But the symbol of this system is degenerate,
which means vanishing of certain relative algebraic invariant.
 \end{rk}

\underline{Example 4.} Now let us consider a nonlinear example:
 $$
u_{xx}=\frac13u_{yy}^3,\quad u_{xyy}=u_{yy}u_{yyy}.
 $$
Its linearization is
 $$
\D_x^2U-u_{yy}^2\D_y^2U=0,\quad \D_x\D_y^2U-u_{yy}\D_y^3U-u_{yyy}\D_y^2U=0.
 $$
Let us introduce the operators $X=\D_x-u_{yy}\D_y$ (characteristic)
and $Y=\D_x+u_{yy}\D_y$. They do not commute $[X,Y]=u_{yyy}(Y-X)$.
Notice also that $X(u_{yy})=0$ and $Y(u_{yy})=2u_{yy}u_{yyy}$ and using this we re-write the above
linearized $E_2+E_3$ system so:
 $$
XYU=0,\quad X^3U=0.
 $$
Notice that $b_2=d_2=e_2=0$ in the notation of $\Upsilon^1_{23}$,
but $d_1=0$ only in this form and $d_1\ne0$ in another representation of the first equation of the system:
 $$
YXU+u_{yyy}(Yu-XU)=0.
 $$
But the coefficient $e_1$ in this normal form is a relative differential invariant and
it controls existence of the integral. For the linearized equation it equals
 $$
\tfrac1{u_{yy}}X^2U=c,
 $$
which can be re-written (modulo equation) as $\D_x\D_yU-u_{yy}\D_y^2U=c$, and this can be integrated
to obtain the intermediate integral of the original problem:
 $$
I=u_{xy}-\tfrac12u_{yy}^2.
 $$
We get the following $\oo=1$ system $2E_2$ that is compatible for any $c$
 $$
u_{xx}=\tfrac13u_{yy}^3,\quad u_{xy}=\tfrac12u_{yy}^2+c.
 $$

 \comm{
Finally let us remark that by dimensional reasons (growth vector)
the case of $2E_2$ with one intermediate integral is always (internally) linearizable,
and the case of $E_2+E_3$ with one intermediate integral is always reduced to
parametric $2E_2$, while if there are 2 i.i. we again get linearization of $E_2+E_3$.
 }

\section{Conclusion: Beyond $\oo=1$ and other generalizations}\label{S5}

The theory of Laplace transformations developed for linear 2nd order PDEs ($\oo=2$)
was further modified to work in the non-linear situation by Darboux and Goursat.

In this paper we mainly concentrated on the general linear $\oo=1$ theory, leaving the
non-linear case for a separate publication. However even the linear theory sheds
a light on the obstructions for $\oo>2$. And indeed this latter case is poor compared to the
theory of $\oo=2$ type.

An important effort to generalize the classical (linear) Laplace theory
to operators of higher order was undertaken in \cite{R}. In particular it was observed
that the Laplace transformation rule applied to one equation inevitably generates several
equations, which was the main obstruction for effective integration theory.

Another attempt was \cite{P}, where transformation theory was constructed for some particular
class of equations, with a degenerate symbol. The reason for this is that starting from 3rd order
the scalar PDEs on the plane have invariants of the symbol, since the latter can be considered
as a planar web. Generic webs have lots of independent differential invariants, and they admit no symmetries.

For recent advances of the factorization part we refer to \cite{T} and
to vast range of papers on differential Galois theory. But the transformation part
has not seen much progress beside the classical 2nd order case (we refer for the
development of the Darboux theory to \cite{AF,AJ,AK}). The reason is that the Laplace
transformation for $\oo>2$ inevitably increases the complexity $\varkappa$ and this gives less chances
of termination for the sequence of Laplace invariants.

Laplace theory, extended for systems of second order equations as the
theory of multidimensional conjugate nets in \cite{D}, was further
developed in \cite{KT,Fe}. This worked for general $\oo$ but again for equations
of very special type, namely semi-Hamiltonian and integrable.

Thus the general theory for $\oo\ge3$ is still lacking, and below we briefly discuss these cases
and $\oo=2$. The case $\oo=1$ turns out, on the contrary, to be a perfect arena for Laplace ideas.

\subsection{Generalized Laplace transformations for $\oo=2$}

The classical method of Laplace concerns $E_2$, we recall it in Appendix \ref{SA}.
The important distinction of this case from $\oo=1$ is that we have a pair of Laplace
transformations $v=Xu$ and $w=Yu$.

Consider the case $2E_3$. A system $\E$ of this type with generic symbol writes
 $$
\left\{ \begin{array}{l}
 YX^2u+a_1X^2u+b_1YXu+c_1Y^2u+d_1Xu+e_1Yu+f_1u=0,\\
 Y^2Xu+a_2X^2u+b_2YXu+c_2Y^2u+d_2Xu+e_2Yu+f_2u=0.
\end{array}\right.
 $$
Compatibility implies $c_1=0$. We begin with Laplace transformation via characteristic $X$.
Then the basic gauge yields $c_2=0$ and compatibility implies $e_1=0$.

With these changes we let $v=Xu$ to be the direct transformation and find the inverse $u=L_1v$
from the 1st PDE of $\E$, where $L_1=-f_1^{-1}(YX+a_1X+b_1Y+d_1)$ and we assume $f_1\ne0$.

Inserting the inversion to the substitution gives the first transformed equation
$(XL_1-1)v=0$, and the second is obtained from the 2nd PDE of $\E$: $(e_2Y+f_2)u=L_2v$,
where $L_2=-(Y^2+a_2X+b_2Y+d_2)$, namely $((e_2Y+f_2)L_1-L_2)v=0$. The transformed
system has symbols $YX^2,Y^2X$ and the type $2E_3$ unless $e_2=0$. In the latter case
the transformed equation is of type $E_2$.

When $f_1=0$, the compatibility gives $e_2=0$. Provided $f_2\ne0$ we get inversion from the
2nd PDE of $\E$, and the 1st PDE gives the transformed $\tilde\E=\{(YX+a_1X+b_1Y+d_1)v=0\}$
of type $E_2$.

In the case $f_1=f_2=0$ the transformation $v=Xu$ brings $\E$ to a system $2E_2$ of type $\oo=1$
and the inverse $u=X^{-1}v$ is integral.

Situation with the second transformation $w=Yx$ is similar.

 \begin{dfn}
Complexity of $\oo=2$ type system is defined by the formula
 $$
\varkappa=\sum_{i=0}^\infty(\dim g_i-2)+1=\sum_{i=2}^\infty(\dim g_i-2).
 $$
 \end{dfn}
The classical case $E_2$ has $\varkappa=0$ and $2E_3$ corresponds to $\varkappa=1$

We can also study the cases of higher complexity $E_3+E_4$ ($\varkappa=2$),
$E_3+E_5$, $3E_4$ (both $\varkappa=3$) etc. forming the zoo of $\oo=2$ type.
These cases are similar and following the considered pattern we get such a conclusion.
 \begin{theorem}
The generalized Laplace transform (any of two) for $\oo=2$ type generically preserves the type
and complexity. In singular cases it can decrease $\varkappa$, but leave $\oo$ or it can reduce the
class to $\oo=1$. \qed
 \end{theorem}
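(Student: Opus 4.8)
The plan is to mimic the case-by-case analysis carried out for $\oo=1$ systems in \S\ref{S11}--\ref{S14}, but now track the two invariants $\oo$ (total multiplicity of the characteristic variety) and $\varkappa$ (complexity, in its $\oo=2$ version) simultaneously. First I would set up the general reduction step: given an involutive $\oo=2$ system, prolong to the involutive level $kE_k$, apply the basic gauge to straighten out the leading $Y^{k-1}$-coefficient, and perform one of the two generalized Laplace transformations $v=Xu$ or $w=Yu$ along one of the two characteristics $X$, $Y$. The point is that $\oo=2$ means $\dim g_i=2$ stably, so reducing the symbolic system by one characteristic leaves a $1$-dimensional residue in the symbol — exactly the data that governs whether the inverse operator is differential, Frobenius-type, or integral, just as in the $\oo=1$ story.

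Next I would run the branching trichotomy on the relative invariant (the analog of $f_1$, $c_2$ in the worked examples), say $\theta$. When $\theta\ne 0$ the inverse is differential, the transformed system stays of type $\oo=2$, and I would verify via a symbol/Young-diagram count that $\dim g_i$ is unchanged for all $i$, so $\varkappa$ is preserved — this is the generic case. When $\theta=0$ but a deeper conditional invariant is nonzero, the inverse comes from solving a Frobenius system of lower order and the transformed $\tilde\E$ has strictly smaller complexity while remaining $\oo=2$; here I would re-use the complexity-bookkeeping from the proof of Theorem \ref{SHA}, interpreting $\varkappa$ via Cauchy data (one arbitrary function of one variable plus $q$ of its derivatives plus $\varkappa-q$ constants, but now with \emph{two} arbitrary functions since $\oo=2$). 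Finally when all relevant invariants vanish the transformation $v=Xu$ collapses one characteristic direction entirely, $\dim g_i$ drops by one stably, and the residual system has $\oo=1$ with integral inverse $u=X^{-1}v$ — this is the class-reducing branch. I would illustrate the whole trichotomy on $2E_3$ (done above), $E_3+E_4$, and $3E_4$, and note that the generic-case argument for arbitrary type in the zoo of $\oo=2$ follows the $kE_k$ template verbatim.

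The main obstacle I expect is the claim that in the generic ($\theta\ne 0$) branch \emph{both} the type and the complexity are preserved, rather than merely $\oo$ being preserved. For $\oo=1$ the analogous transformation always \emph{strictly decreased} $\varkappa$ (Theorem \ref{SHA}), because the leading coefficient $\a_q$ depended only on $y$ so $v=u_x$ killed one derivative; for $\oo=2$ there are two arbitrary functions and the corresponding leading coefficient along the $X$-transformation need not be $X$-constant, so the naive argument fails and one genuinely must check that the count of derivatives-plus-constants in the Cauchy data is stable. I would handle this by a careful symbol computation: the key input is that reducing a $1$-dimensional symbol modulo $X$ after straightening the basic gauge returns a symbolic system of the \emph{same combinatorial shape}, hence the same $\sum_i(\dim g_i-2)$, so $\varkappa$ is literally unchanged by the formula in the definition. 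The subtlety is purely that one has to rule out accidental drops coming from the second family of invariants; that is exactly what "generically" in the statement absorbs, and the structure of the argument is the same as the phenomenology summary in \S\ref{S24}, now with an extra invariant layer for the second characteristic. The remaining cases (singular strata) then follow by the monotonicity already established for $\oo=1$ applied after the class reduction $\oo=2\to\oo=1$.
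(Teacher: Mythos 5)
Your overall plan---branch on relative invariants and check the three outcomes on low-complexity types---is in the right spirit: the paper's own argument is nothing more than the explicit computation for $2E_3$ (branching on $f_1$, $e_2$, $f_2$) plus the remark that the other species of the $\oo=2$ zoo behave in the same way, so your intention to work out $2E_3$, $E_3+E_4$, $3E_4$ concretely is exactly what is needed. However, two of the tools you rely on would fail. First, you cannot ``re-use the complexity-bookkeeping from the proof of Theorem \ref{SHA}'' with two arbitrary functions: that proof rests on the finite superposition representation $u=\a_qf^{(q)}+\dots+\a_0f+c_1\phi_1+\dots$ of the general solution, which is a specifically $\oo=1$ phenomenon (the paper stresses right after Theorem \ref{Thm} that no such representation exists for $\oo\ge2$; non-termination of the Laplace series is precisely the issue). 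For $\oo=2$ the change of $\varkappa$ has to be read off the explicitly constructed transformed system: generically ($f_1\ne0$, $e_2\ne0$) the new generators have symbols $YX^2$, $Y^2X$, hence type $2E_3$ and the same $\varkappa$; if $e_2=0$ with $f_1\ne0$, or $f_1=0$ with $f_2\ne0$, one gets a single $E_2$, so $\varkappa$ drops from $1$ to $0$ with $\oo=2$ kept; only for $f_1=f_2=0$ does one land in $2E_2$ of class $\oo=1$ with integral inverse.

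Second, your trichotomy imports the $\oo=1$ pattern of \S\ref{S24} (differential / Frobenius / integral inverse) as if it were aligned with (preserve $\varkappa$ / decrease $\varkappa$ / reduce $\oo$), but it is not: in the $\varkappa$-decreasing branches of $2E_3$ the inverse is still a differential operator; the vanishing invariants control the order and number of the transformed generators, not the nature of the inverse. Relatedly, the generic branch cannot be settled by a pure ``same combinatorial shape'' symbol count, because the symbols of $\tilde\E$ are not determined by the symbol of $\E$ alone---they depend on the lower-order relative invariants, which is exactly the content of the branching---so one must actually build $\tilde\E$ through the differential inverse, as in the worked case. Finally, the ``prolong to the involutive level $kE_k$'' template is both miscounted for $\oo=2$ (with $\dim g_k=2$ the truncation has type $(k-1)E_k$) and unusable for this statement: discarding the lower-order generators changes both the type and the complexity, so nothing about preservation of the original type and $\varkappa$ can be inferred from it. What survives of your proposal, and what constitutes the proof, is the direct case-by-case verification in the style of \S\ref{S14}.
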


Note that we cannot reach the systems of finite type $\oo=0$ in one step as our Laplace transformations
are always of order 1 (higher order generalizations are plausible).

The integration theory in the general case $\oo=2$ is thus characterized by dropping to class $\oo=1$
type in a sequence of Laplace transformations to both ends. This would reduce solution of $\E$
to ODEs.

If only one sequence of transformations (say $v=Xu$) terminates at $\oo=1$ class, this
gives semi-integrability, i.e. possibility to find many solutions
(a family depending on 1 function of 1 variable) but not all.

\subsection{Generalized Laplace transformations for $\oo\ge3$}

For the class $\oo\ge3$ we define \emph{the complexity} by the formula
 $$
\varkappa=\sum_{i=0}^\infty(\dim g_i-\oo)+\tfrac12\oo(\oo-1)=\sum_{i=\oo}^\infty(\dim g_i-\oo).
 $$
The simplest $\oo=3$ case, corresponding to $\varkappa=0$, is of type $E_3$:
 $$
ZYXu+aX^2u+bYXu+cY^2u+dXu+eYu+fu=0.
 $$
Here $Z$ is a linear combination of $X$ and $Y$. If the web of characteristics
is parallelizable, we can assume $Z=X+Y$. Now there can be 3 Laplace transformations,
with symbol $X$, $Y$ and $Z$ respectively.

Let us study the $X$-transformation. By the basic gauge we achieve $c=0$.
We assume for simplicity that (the new coefficient) $e\ne0$.

Then the Laplace transformation $v=Xu$ leads to $\Psi u=Lv$, where $\Psi=Y+\frac{f}e$, $L=ZY+\dots$
This yields the equation $(XL-\Psi)v=\theta\,u$, where $\theta=[X,\Psi]$ is a function.

Only in the singular case $\theta=0$ has the transformed equation type $E_3$ again:
then $\tilde\E$ is $(XL-\Psi)v=0$ with the same symbol $ZYX$.

In the general case $\theta\ne0$ the above formula gives the differential inversion $u=\theta^{-1}(XL-\Psi)v$
and inserting this into the equation and the substitution we get the following pair of PDEs:
 $$
(ZYX^2+\dots)v=0,\quad (ZY^2X+\dots)v=0.
 $$
Thus the result of the transformation is a system $\tilde\E$ of type $2E_4$ (and class $\oo=3$).
After the next transformation we get equations with higher complexity $\varkappa$, like $3E_5$ or $E_4+E_5$ etc.

 \begin{theorem}
For $\oo=3$ type system any (of three) generalized Laplace transformation generically
increases the complexity by 1. Only in some singular cases it preserves the type
or decreases $\varkappa$ or reduces $\oo$. \qed
 \end{theorem}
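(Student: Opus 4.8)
The plan is to follow the same template already used for $\oo=1$ and $\oo=2$ systems and carry it through explicitly in the lowest-complexity case $E_3$, then observe that the behaviour persists for higher-complexity species by the complexity-tracking argument. So I would first fix a characteristic direction, say $X$, normalize $Z=X+Y$ by the parallelizability assumption on the characteristic web, and apply the basic gauge to kill the coefficient $c$ (the $Y^2$-term) — exactly as in the $\oo=1$ analysis the compatibility condition together with the gauge freedom forces this. Then I would write the Laplace substitution $v=Xu$, extract the relation $\Psi u = Lv$ with $\Psi = Y + f/e$ and $L=ZY+\dots$ from the single defining PDE (assuming the generic condition $e\ne0$), and compute the commutator obstruction $\theta=[X,\Psi]$.

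The second step is the dichotomy on $\theta$. When $\theta=0$ (a codimension-one singular stratum), the transformed system $\tilde\E$ is the single equation $(XL-\Psi)v=0$ with symbol $ZYX$ again — so the type $E_3$ and the complexity are preserved. When $\theta\ne0$ (the generic case), $\theta$ is invertible and gives a differential inversion $u=\theta^{-1}(XL-\Psi)v$; substituting back into the equation and into the substitution $v=Xu$ yields two PDEs with symbols $ZYX^2$ and $ZY^2X$, i.e. a system of type $2E_4$ of class $\oo=3$, whose complexity is one larger. This is precisely the assertion "generically increases the complexity by $1$; only in singular cases it preserves the type or decreases $\varkappa$ or reduces $\oo$", the last alternative (reduction of $\oo$) occurring on the deeper singular strata where further coefficients vanish (analogues of $f_1=f_2=0$ in the $2E_3$ discussion), exactly as in the $\oo=1$ and $\oo=2$ cases.

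The third step is to promote the $E_3$ computation to the general $\oo=3$ system of arbitrary complexity. Here I would argue as in the proof of Theorem \ref{SHA}: rescale so $X=\p_x$, interpret $\varkappa$ via Cauchy data and the structure of the symbol sequence $\dim g_i$, and note that the substitution $v=u_x$ changes $\dim g_i$ in a way dictated by which coefficient-columns (in the Young-diagram picture of the symbol) survive the basic gauge. The new feature relative to $\oo=1$ is that the reduced symbolic system by $X$ has $\oo-1=2$ characteristics left rather than $0$, so the symbol dimensions do not collapse; a direct count of the Young diagram then shows $\sum(\dim g_i-\oo)$ goes up by one generically, and only drops or stays put when extra algebraic degeneracies of the symbol or extra vanishing compatibility coefficients occur.

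The main obstacle is the third step: unlike the $\oo=1$ case, where the reduced symbolic system is of finite type and the bookkeeping terminates, for $\oo=3$ the symbol stays "fat" and one must show carefully that the generic Young-diagram count really produces $+1$ and not, say, $+2$ or $0$ — i.e. that the maximal-rank genericity condition on the linear-algebra system extracting $u$ from its $Y$-derivatives behaves as in the refined Young diagrams mentioned after the existence theorem in \S\ref{S31}. I expect this to require a short but slightly delicate induction on $k_\text{max}$, tracking both the order of the compatibility conditions and the symbol class, analogous to (but one dimension bulkier than) the argument behind Proposition \ref{P1}'s numerical consequences in \S\ref{S22}.
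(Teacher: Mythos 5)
Your first two steps are precisely the paper's own proof: the $E_3$ normalization ($c=0$ by the basic gauge, $e\ne0$ generic), the relation $\Psi u=Lv$ with $\Psi=Y+f/e$, $L=ZY+\dots$, the dichotomy on the function $\theta=[X,\Psi]$, and in the generic case $\theta\ne0$ the differential inversion producing the pair with symbols $ZYX^2$, $ZY^2X$, i.e. type $2E_4$ of class $\oo=3$ with $\varkappa$ raised by one. The general-complexity bookkeeping you single out as the main obstacle in your third step is not carried out in the paper either --- the theorem is asserted there by iterating the same pattern for the higher-complexity species --- so your proposal takes essentially the same route and is, if anything, more explicit about where a fully detailed argument would still require work.
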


For general $\oo$ the complexity increases by $\le \oo-2$ units with any generalized Laplace transformation.
Consequently the integration in closed form is a rare occasion for the class $\oo\ge3$.

\appendix

\section{The method of Laplace}\label{SA}

This method is well-known \cite{D,G,F}. But for completeness, and to show parallel with what is done
for $\oo=1$ case, we give a short review.

The classical case concerns a hyperbolic equation $\E$ of the type $E_2$ (which has class $\oo=2$).
Consider at first the linear case:
 $$
\Delta[u]=u_{xy}+a u_x+b u_y+cu=0,
 $$
where $a,b,c$ are functions on $M=\R^2$. We try to factorize
 $$
\Delta[u]=(\D_y+a)(u_x+b u)-k_0u
 $$
with $k_0=b_y+ab-c$ the 0th level Laplace invariant\footnote{It belongs to a sequence of
(fundamental) relative invariants with respect to linear point transformations
preserving the symbol of $\E$.}. If $k_0=0$, we have an intermediate integral $v=u_x+bu$.

Otherwise the latter formula defines the Laplace transformation with differential inverse
$u=\frac1{k_0}(v_y+av)$. Substitution of the inverse to the direct Laplace transformation
gives the equation
 $$
\Delta_1[v]=(D_x+b)\bigl(\tfrac{v_y+av}{k_0}\bigr)-v=v_{xy}+a_1v_x+b_1v_y+c_1v=0
 $$
Since the equations are effectively equivalent, we search for factorization of this new PDE.
The obstruction for existence of intermediate integral is another relative invariant
$k_1=(b_1)_y+a_1b_1-c_1$.

If the sequence $k_0,k_1,\dots$ stops at zero $k_n=0$, $\E$ is called Darboux semi-integrable.
In this case the equation $\E$ possesses an intermediate integral of order $n$,
which together with the original PDE forms a compatible pair $E_2+E_n$.
Thus semi-integrability can be interpreted as a reduction of $\oo=2$ equation to
a system of class $\oo=1$, which is integrable via ODEs.

The method of Laplace assumes that the sequence of invariants is finite in both sides,
where to the other side we add up invariants\footnote{The invariant $h_n$ can be also interpreted
as an obstruction to find a compatible differential constraint for the hyperbolic equation $\E=\{\Delta[u]=0\}$,
which is a $y$-parametrized ODE of order $n$: $\sum_0^n \a_i u^{(i,0)}=0$.}
$h_0,h_1,\dots$ obtained via $X$-factorization
 $$
\Delta[u]=(\D_x+b)(u_x+a u)-h_0u,
 $$
so that the corresponding Laplace transformation is $v=u_x+a u$ with differential inverse provided
$h_0=a_x+ab-c\ne0$ etc.

 \begin{rk}
The sequence $\dots h_1,h_0,k_0,k_1\dots$ of relative invariants leads to a
collection of absolute invariants, but they do not form a basis
that solves the equivalence problem; some other invariants shall be added \cite{I}.
 \end{rk}

In the non-linear case we cannot proceed with precise transformations, but can compute the sequence of Laplace
invariants anyway via linearizations of the corresponding operators \cite{AK,AJ}.

Darboux integrability, i.e. vanishing of these generalized Laplace invariants to both sides,
again implies closed form of the general solution, as in the linear case. The above interpretation of semi-integrability via reduction of the class from $\oo=2$ to $\oo=1$ is still valid.

\section{Spencer $\delta$-complex and cohomology}\label{SB}

These will be described only in the linear case and for the base $M=\R^2$,
see the general description in \cite{KL$_1$}.

In the case of scalar equations the symbols $g_k$ are the subspaces of the kernels
$\pi_{k,k-1}:J^kM\to J^{k-1}M$, which can be identified with $S^kT^*$,
$T=T_xM$ being the (model) tangent space.

The equation of the lower order $\E_{k_1}$ specifies the subbundle $\E_{k_1}\subset J^{k_1}M$
with the fiber $g_{k_1}$. The next symbols $g_{k+1}$ equals the prolongations
$g_k^{(1)}=(g_k\ot T^*)\cap(S^{k+1}T^*)$ for $k_1\le k<k_2$.

They correspond to the symbols of the prolonged equation $\E_{k_1}$ provided
the compatibility conditions of order $\le k+1$ hold. At the jet-level $k_2$
new equations are added, and we get $g_{k_2}\subset g_{k_2-1}^{(1)}\subset S^{k_2}T^*$ etc.

These symbols are united into the Spencer $\delta$-complex
 \begin{equation}\label{Spencer}
0\to g_{k+1}\to g_k\ot T^*\stackrel{\delta}\longrightarrow g_{k-1}\ot\La^2T^*,
 \end{equation}
where the first arrow is the inclusion and the morphism $\delta$ is the Spencer differential,
i.e. the symbolic exterior derivative (if we interpret $g_k\ot T^*$ as differential 1-forms
on $T$ with polynomial coefficients).

The cohomology group $H^{k,1}(\E)$ at the mid-term of (\ref{Spencer}) counts
(i.e. $\dim H^{k,1}$ equals) the amount of new equations in $\E$ at the jet-level $k$.

The cohomology $H^{k-1,2}(\E)$ at the last term of (\ref{Spencer}) is the most important
--- its elements are the compatibility conditions.

If all the compatibility vanish, the system $\E$ is said to be \emph{formally integrable}.
This is tantamount to the claim that the projections $\pi_{k+1,k}:\E_{k+1}\to\E_k$ are vector
bundles\footnote{Contrary to this, solvability only means that $\E_\infty\to M$ is a (nontrivial) bundle.}
(regularity means constancy of ranks).

Formally integrable systems $\E$ are not necessarily involutive on the level $k=k_{\max}$, but they
are such after some prolongations (for larger $k$). Since involutivity is equivalent to vanishing of the
Spencer cohomology \cite{S} $H^{k,i}=0$, this happens on the jet-level $k$ where $\dim g_k$ stabilizes.

The growth of $\dim g_k$ for large $k$ is given by the Hilbert polynomial
$P_\E(k)=\z\,k^d+\dots$, where $d$ is dimension of the characteristic variety
$\op{Char}^\C(\E)\subset \mathbb{P}^\C T^*$ and $\z$ is its degree \cite{KL$_2$}.

For a scalar system $\E$ the (complex) characteristic variety $\op{Char}^\C(\E)$ is the intersection
of all characteristic varieties for individual equations $F_i=0$ from $\E=\{F_i=0:1\le i\le r\}$ 
(for each of them the characteristics are defined as loci of complexified characteristic polynomial \cite{Pt},
namely of the Fourier transform of the symbol $\op{smbl}(F_i)$).
Alternatively a covector $p\in{}^\C T^*\setminus\{0\}$ is characteristic if $p^k\in g_k^\C$ for all $k\le k_{\max}$.

Notice that for linear systems the variety $\op{Char}^\C(\E)$ depends only on the point $x$ of the base $M=\R^2$.

If $\E$ is determined or overdetermined, then $\op{Char}^\C(\E)$ consists of $\oo=\z$ points
corresponding to (complex) characteristics of $\E$, i.e. $d=0$ and we obtain that $g_k=\oo$ for large $k$.


\end{document}